\theoremstyle{plain}
\newtheorem{thm}{Theorem}
\newtheorem{lem}[thm]{Lemma}
\newtheorem{cor}[thm]{Corollary}
\theoremstyle{definition}
\newtheorem{rmk}[thm]{Remark}
\numberwithin{thm}{section}
\numberwithin{equation}{thm}
\newcommand{\Hom}{{\rm Hom}}
\newcommand{\Trace}{{\rm Trace}}
\newcommand{\sym}{\mathrm{Sym}}
\newcommand{\alt}{\mathrm{Alt}}
\newcommand{\Aut}{\mathrm{Aut}}
\newcommand{\Out}{\mathrm{Out}}
\newcommand{\Irr}{\mathrm{Irr}}
\newcommand{\Innd}{\mathrm{Inndiag}}
\newcommand{\eps}{\epsilon}
\newcommand{\SL}{\mathrm{SL}}
\newcommand{\PSL}{\mathrm{PSL}}
\newcommand{\GL}{\mathrm{GL}}
\newcommand{\PGL}{\mathrm{PGL}}
\newcommand{\SU}{\mathrm{SU}}
\newcommand{\PSU}{\mathrm{PSU}}
\newcommand{\Sp}{\mathrm{Sp}}
\newcommand{\PSp}{\mathrm{PSp}}
\newcommand{\SO}{\mathrm{SO}}
\newcommand{\sgn}{\mathrm{sgn}}
\newcommand{\Norm}{{\rm Norm}}
\newcommand{\sC}{{\mathcal C}}
\newcommand{\sF}{{\mathcal F}}
\newcommand{\sG}{{\mathcal G}}
\newcommand{\sH}{{\mathcal H}}
\newcommand{\sL}{{\mathcal L}}
\newcommand{\sP}{{\mathcal P}}
\newcommand{\sR}{{\mathcal R}}
\newcommand{\sT}{{\mathcal T}}
\newcommand{\A}{{\mathbb A}}
\newcommand{\C}{{\mathbb C}}
{

\newcommand{\F}{{\mathbb F}}
\newcommand{\G}{{\mathbb G}}

\newcommand{\Q}{{\mathbb Q}}

\newcommand{\Z}{{\mathbb Z}}

\newcommand{\triv}{{\mathds{1}}}
\newcommand{\tw}[1]{{}^#1\!}

\begin{document}
\title{Rigid Local Systems and Alternating Groups}
\author{Robert M. Guralnick}
\address{Department of Mathematics, University of Southern California,
Los Angeles, CA 90089-2532, USA}
\email{guralnic@usc.edu}
\author{Nicholas M. Katz}
\address{Department of Mathematics, Princeton University, Princeton, NJ 08544-1000, USA}
\email{nmk@math.princeton.edu}
\author{Pham Huu Tiep}
\address{Department of Mathematics, Rutgers University, Piscataway, NJ 08854-8019, USA}
\email{tiep@math.rutgers.edu}

\thanks{The first author was partially supported by NSF grant DMS-1600056 and
the third author was partially supported by NSF grant DMS-1665014.   The first author
would also like to thank the Institute for Advanced Study, Princeton for its support}.

\maketitle

\tableofcontents

\section{Introduction}

In earlier work \cite{Ka-RLSFM}, Katz exhibited some very simple one parameter families of exponential sums which gave rigid local systems on the affine line in characteristic $p$ whose geometric (and usually, arithmetic) monodromy groups were $\SL_2(q)$, and he exhibited other such very simple families giving $\SU_3(q)$. [Here $q$ is a power of the characteristic $p$, and $p$ is odd.] In this paper, we exhibit equally simple families whose geometric monodromy groups are the alternating groups $\alt(2q)$. We also determine their arithmetic monodromy groups. See Theorem 3.1. [Of course from the resolution \cite{Ray} of the Abhyankar Conjecture, any finite simple group whose order is divisible by $p$ will
occur as the geometric monodromy group of some local system on $\A^1/\overline{\F}_p$; the interest here is that it occurs in our particularly simple local systems.]

In the earlier work of Katz, he used a theorem to Kubert to know that the monodromy groups in question were finite, then work of Gross \cite{Gross} to determine which finite groups they were.
Here we do not have, at present, any direct way of showing this finiteness. Rather, the situation is more complicated and more interesting. Using some basic information about these local systems (cf. Theorem 6.1), the first and third authors prove a fundamental dichotomy:  the geometric monodromy group is either $\alt(2q)$ or it is the special orthogonal group $\SO(2q-1)$. The second author uses an elementary polynomial identity to compute the third moment as being $1$ (cf. Theorem 7.1), which rules out the
$\SO(2q-1)$ case. This roundabout method establishes the theorem. It would be interesting to find a ``direct" proof that
these local systems have integer (rather than rational) traces; this integrality is in fact equivalent to their monodromy groups being finite, cf. \cite [8.14.6]{Ka-ESDE}. But even if one had such a direct proof, it would still require serious group theory to show that their
geometric monodromy groups are the alternating groups.

\section{The local systems in general}
Throughout this paper, $p$ is an odd prime, $q$ is a power of $p$, $k$ is a finite field of charactertistic $p$, $\ell$ is a prime $\neq p$,  
$$\psi = \psi_k: (k, +) \rightarrow \mu_p \subset \overline{\Q_\ell}^\times$$
 is a nontrivial additive character of $k$, and
 $$\chi_2=\chi_{2,k}:k^\times \rightarrow \pm 1\subset \overline{\Q_\ell}^\times$$
 is the quadratic character, extended to $k$ by $\chi_2(0):=0$.
 For $L/k$ a finite extension, we have the nontrivial additive character
 $$\psi_{L/k}:=\psi_k\circ \Trace_{L/k}$$
 of $L$, and the quadratic character $\chi_{2,L}=\chi_{2,k}\circ \Norm_{L/k}$ of $L^\times$, extended to $L$ by 
 $\chi_{2,L}(0)=0$.
 
 On the affine line $\A^1/k$, we have the Artin-Schreier sheaf $\sL_{\psi(x)}$. On $\G_m/k$ we have the Kummer sheaf $\sL_{\chi_{2}(x)}$ and its extension by zero $j_!\sL_{\chi_{2}(x)}$ (for $j:\G_m \subset \A^1$ the inclusion) on $\A^1/k$.
 
 For an odd integer $n=2d+1$ which is prime to $p$, we have the rigid local system
 $$\sF(k,n,\psi) :=FT_\psi(\sL_{\psi(x^n)}\otimes j_!\sL_{\chi_{2}(x)})$$
 on $\A^1/k$. Let us recall the basic facts about it, cf. \cite[1.3 and 1.4]{Ka-NG2}.
 
  It is lisse of rank $n$, pure of weight one, and orthogonally self dual, with its geometric monodromy group
 $$G_{geom} \subset \SO(n,\overline{\Q_\ell}).$$
Recall that $G_{geom}$ is the Zariski closure in $\SO(n,\overline{\Q_\ell})$ of the image of the geometric fundamental group $\pi_1(\A^1/\overline{k})$ in the representation which ``is" the local system $\sF(k,n,\psi)$. For ease of later reference, we recall the following fundamental fact.
\begin{lem}\label{dense}For any lisse local system on $\A^1/\overline{k}$, the subgroup $\Gamma_p$ of its $G_{geom}$ generated by elements of $p$-power order is Zariski dense.
\end{lem}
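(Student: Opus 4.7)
The plan is to show $H:=\overline{\Gamma_p}^{{\rm Zar}}$ equals all of $G_{\rm geom}$. First note $H$ is normal in $G_{\rm geom}$: conjugation preserves the property of having $p$-power order, so $\Gamma_p$ is normal in $G_{\rm geom}(\overline{\Q_\ell})$ and hence so is its Zariski closure. Let $\pi:G_{\rm geom}\twoheadrightarrow G_{\rm geom}/H$ denote the quotient algebraic group, and set $\bar\rho:=\pi\circ\rho$, where $\rho:\pi_1(\A^1/\overline{k})\to G_{\rm geom}(\overline{\Q_\ell})$ is the monodromy representation attached to the local system. Since $\bar\rho$ is continuous for the $\ell$-adic topology on the target, its kernel is a closed normal subgroup of $\pi_1(\A^1/\overline{k})$.

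Next, I would exploit the wild inertia at infinity. Pick an embedded decomposition group at $\infty$ with wild inertia subgroup $P_\infty\subset\pi_1(\A^1/\overline{k})$; by construction $P_\infty$ is pro-$p$. Because $\ell\neq p$, any continuous homomorphism from a pro-$p$ group into $\GL_n(\overline{\Q_\ell})$ has finite image: $\rho|_{P_\infty}$ lands in $\GL_n(\sO)$ for some ring of integers $\sO$ in a finite extension of $\Q_\ell$, and an open principal congruence subgroup of $\GL_n(\sO)$ is pro-$\ell$ of finite index, so a continuous pro-$p$ image meets it trivially and is therefore finite. Thus $\rho(P_\infty)$ is a finite $p$-group; every element has $p$-power order and lies in $\Gamma_p\subset H$, so $\bar\rho(P_\infty)=1$.

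The geometric input is the triviality $\pi_1^{{\rm tame}}(\A^1/\overline{k})=1$: any connected tame \'etale cover of $\A^1/\overline{k}$ extends to a cover of $\P^1/\overline{k}$ tame at $\infty$ and unramified elsewhere, which Riemann--Hurwitz forces to be trivial. Equivalently, the closed normal subgroup of $\pi_1(\A^1/\overline{k})$ generated by $P_\infty$ is all of $\pi_1$. Combined with the closedness and normality of $\ker(\bar\rho)$ and $\bar\rho(P_\infty)=1$, this forces $\bar\rho$ to be trivial, so $\rho(\pi_1)\subset H$; taking Zariski closures yields $G_{\rm geom}=\overline{\rho(\pi_1)}^{{\rm Zar}}\subset H$, i.e.\ $H=G_{\rm geom}$. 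There is no serious obstacle; the argument just packages two standard facts. The subtle point is to work at the level of the quotient $G_{\rm geom}/H$, so that the continuity of $\bar\rho$ promotes the vanishing on $P_\infty$ to vanishing on its closed normal closure in $\pi_1$.
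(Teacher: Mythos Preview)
Your argument is correct and takes a genuinely different route from the paper's. The paper works entirely on the algebraic-group side: setting $M=G_{\rm geom}/H$, it first kills the component group $M/M^0$ (any finite prime-to-$p$ quotient of $\pi_1(\A^1/\overline{k})$ vanishes), then kills $M/\sR_u$ by observing that any irreducible representation of $M$ would give a tame local system on $\A^1/\overline{k}$ and hence be trivial, and finally kills the remaining unipotent $M$ using $H^1(\A^1/\overline{k},\overline{\Q_\ell})=0$. So the paper invokes the tameness input three separate times, filtered through the structure theory of linear algebraic groups.

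You instead work on the profinite-group side and use the tameness input once, in the sharp form that $P_\infty$ normally topologically generates $\pi_1(\A^1/\overline{k})$. Together with the standard fact (via the pro-$\ell$ congruence kernel) that $\rho(P_\infty)$ is a finite $p$-group, this immediately forces $\rho(\pi_1)\subset H$. Your approach is shorter and avoids any appeal to the connected component, unipotent radical, or $H^1$; the paper's approach, on the other hand, never needs to name $P_\infty$ explicitly and stays closer to the Tannakian viewpoint. One small remark: your continuity claim for $\bar\rho$ is fine, but you could bypass it entirely by noting that $\ker(\bar\rho)=\rho^{-1}(H(\overline{\Q_\ell}))$ and that a Zariski-closed subset of $\GL_n(\overline{\Q_\ell})$ is automatically $\ell$-adically closed.
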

\begin{proof}Denote by $N$ the Zariski closure of $\Gamma_p$. Then $N$ is a normal subgroup of $G_{geom}$.
We must show that the quotient $M:=G_{geom}/N$ is trivial. We first note that $M/M^0$ is trivial, as it is a finite, prime to $p$ quotient of $\pi_1(\A^1/\overline{k})$. Thus $M$ is connected.
We next show that $M^{red}:=M/\sR_u$, the quotient of $M$ by its unipotent radical, is trivial. For this, it suffices to show
that $M$ has no nontrivial irreducible representations. But any such representation is a local system on $\A^1/\overline{k}$ which is tamely ramified at $\infty$, so is trivial. Thus $M$ is unipotent. But $H^1(\A^1/\overline{k},\overline{\Q_\ell})$ vanishes, so any unipotent local system on  $\A^1/\overline{k}$ is trivial, and hence $M$ is trivial.
\end{proof}

Let us denote by $A(k,n,\psi)$ the Gauss sum
 $$A(k,n,\psi):= -\chi_2(n(-1)^d)\sum_{x \in k^\times}\psi(x)\chi_2(x).$$
 By the Hasse-Davenport relation, for $L/k$ an extension of degree $d$, we have
 $$A(L,n,\psi_{L/k})=(A(k,n,\psi))^d.$$
The twisted local system
 $$\sG(k,n,\psi):= \sF(k,n,\psi)\otimes A(n,k,\psi)^{-deg}$$
 is pure of weight zero and has
 $$G_{geom}\subset G_{arith} \subset  \SO(n,\overline{\Q_\ell}).$$
 
 Concretely, for $L/k$ a finite extension, and $t \in L$, the trace at time $t$ of $\sG(k,n,\psi)$ is
 $$\Trace(Frob_{t,L}|\sG(k,n,\psi) )= -(1/A(L,n,\psi_{L/k}))\sum_{x \in L^\times}\psi_{L/k}(x^n+tx)\chi_{2,L}(x)=$$
 $$=-(1/A(L,n,\psi_{L/k}))\sum_{x \in L}\psi_{L/k}(x^n+tx)\chi_{2,L}(x),$$
 the last equality because the $\chi_2$ factor kills the $x=0$ term.

 Let us recall also \cite[3.4]{Ka-NG2} that the geometric monodromy group of $\sF(k,n,\psi)$, or equivalently of $\sG(k,n,\psi)$, is independent of the choice of the pair $(k, \psi)$.
 
 To end this section, let us recall the relation of the local system $\sF(k,n,\psi)$ to the hypergeometric sheaf 
 $$\sH_n:= \sH(!,\psi;{\rm all\ char's\ of\  order\  dividing\ }n;\chi_2).$$
 According to \cite[9.2.2]{Ka-ESDE}, $\sF(k,n,\psi)|\G_m$ is geometrically isomorphic to a multiplicative translate of the Kummer pullback $[n]^\star \sH_n$. [An explicit descent of $\sF(k,n,\psi)|\G_m$ through the $n$'th power map
 is given by the lisse sheaf on $\G_m$ whose trace function at time $t \in L^\times$, for $L/k$ a finite extension, is
 $$t \mapsto -\sum_{x \in L^\times}\psi_{L/k}(x^n/t + x)\chi_{2,L}(x/t).$$
 The structure theory of hypergeometric sheaves show that this descent is, geometrically, a multiplicative translate of the asserted $\sH_n$.]
 
 \section{The candidate local systems for $\alt(2q)$}
 In this section, we specialize the $n$ of the previous section to
 $$n=2q-1 = 2(q-1) +1.$$
 
 The target theorem is this.
 \begin{thm}Let $p$ be an odd prime, $q$ a power of $p$, $k$ a finite field of characteristic $p$, $\ell$ a prime $\neq p$,
 and $\psi$ a nontrivial additive character of $k$. For the $\ell$-adic local system $\sG(k,2q-1,\psi)$ on $\A^1/k$, its geometric and arithmetic monodromy groups are given as follows.
 \begin{itemize}
 \item[(1)]$G_{geom} =\alt(2q)$ in its unique irreducible representation of dimension $2q-1$.
 \item[(2a)]If $-1$ is a square in $k$, then $G_{geom}=G_{arith}=\alt(2q)$.
  \item[(2b)]If $-1$ is not a square in $k$, then $G_{arith}= \sym(2q)$, the symmetric group, in its irreducible representation labeled by the partition $(2,1^{2q-2})$, i.e. \newline
  ${\rm(the\ deleted\ permutation\ representation\ of\  }\sym(2q)) \otimes \sgn.$
 \end{itemize}
  \end{thm}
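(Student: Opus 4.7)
The strategy, as sketched in the introduction, is to combine local monodromy data for $\sG(k, 2q-1, \psi)$ with a group-theoretic dichotomy and a moment computation. From Section 2 we already have $G_{geom} \subseteq G_{arith} \subseteq \SO(2q-1, \overline{\Q_\ell})$, and Lemma \ref{dense} tells us that the subgroup $\Gamma_p \subseteq G_{geom}$ generated by $p$-power order elements is Zariski dense. The first step is to read off from the definition of $\sG(k,2q-1,\psi)$ as a Fourier transform the relevant local invariants (rank $2q-1$, tame behavior at $0$ coming from the $\chi_2$ twist, wild behavior at $\infty$ with explicit Swan conductor, and the orthogonal self-duality); this is precisely the content of the eventual Theorem 6.1 and in particular identifies an explicit unipotent element (with a prescribed Jordan block structure in characteristic $p$) sitting in the image of inertia at $\infty$.

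The main group-theoretic step is then to show the dichotomy: the connected Zariski closure of $\Gamma_p$, together with the local-monodromy element, forces $G_{geom}$ to be either the full group $\SO(2q-1)$ or the alternating group $\alt(2q)$ in its irreducible $(2q-1)$-dimensional deleted permutation representation. The plan is to invoke the classification of irreducible primitive subgroups of $\SO(2q-1, \overline{\Q_\ell})$ and, using the constraints that the subgroup must contain the specific $p$-element identified above and must act irreducibly with the given determinant, eliminate every intermediate possibility (small classical subgroups such as $\SO_{2q-1}$-embedded $\mathrm{Spin}_7$ or $\SL_2$, as well as other finite quasisimple candidates arising in the Aschbacher-style analysis). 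I expect this to be the hardest and longest step, because it requires the Classification of Finite Simple Groups and rather delicate character-theoretic information about $\alt(2q)$ in its deleted permutation representation.

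To break the tie, one computes the third moment
\[
M_3 := \lim_{L/k} \frac{1}{|L|}\sum_{t \in L}\Trace(\mathrm{Frob}_{t,L}\mid \sG(k,2q-1,\psi))^3.
\]
For the standard $(2q-1)$-dimensional representation of $\SO(2q-1)$ one has $M_3 = 0$, since there is no invariant trilinear form on the standard module. On the other hand, unpacking the trace formula, $M_3$ becomes (after cancelling the $A(L,2q-1,\psi_{L/k})^3$ normalization) a weighted sum over triples $(x_1,x_2,x_3) \in L^3$ of $\psi_{L/k}(x_1^n+x_2^n+x_3^n+t(x_1+x_2+x_3))\chi_{2,L}(x_1x_2x_3)$. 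The outer sum over $t$ collapses onto the hyperplane $x_1+x_2+x_3=0$, and the remaining character sum is controlled by an elementary polynomial identity for $x_1^n+x_2^n+x_3^n$ on that hyperplane when $n = 2q-1$ in characteristic $p$; this is the Theorem 7.1 calculation, and it yields $M_3 = 1$. Together with the dichotomy, this forces $G_{geom} = \alt(2q)$.

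For the arithmetic claim, $G_{geom}=\alt(2q)$ is normal in $G_{arith}\subseteq \SO(2q-1)$, and the normalizer of $\alt(2q)$ in $\SO(2q-1)$ (with respect to this representation) is $\sym(2q)$ acting by (deleted permutation) $\otimes\, \sgn$; hence $G_{arith}/\alt(2q)\hookrightarrow \Z/2$. To decide which case holds over $k$, I would compute (or read off from the determinant of $\sG(k,2q-1,\psi)$ together with a single Frobenius trace) the sign by which Frobenius over $k$ acts in the $\sym(2q)/\alt(2q)$ quotient. The normalizing Gauss sum $A(k, 2q-1, \psi)$ introduces a factor of $\chi_2(n(-1)^d) = \chi_2(-1)^{(q-1)}\chi_2(2q-1)$, and tracking how this interacts with the trace of a lift of a transposition identifies the obstruction with $\chi_2(-1)$ in $k$, i.e.\ with whether $-1$ is a square in $k$. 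This last step is comparatively routine once the geometric monodromy has been determined.
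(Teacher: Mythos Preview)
Your overall architecture matches the paper's: local monodromy input, group-theoretic reduction to a dichotomy $\alt(2q)$ versus $\SO(2q-1)$, third moment $M_3=1$ to eliminate $\SO$, then determine $G_{arith}$. However, two of your ingredients are mis-specified in ways that matter.

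\medskip
\textbf{Local input.} You describe Theorem~6.1 as producing ``an explicit unipotent element (with a prescribed Jordan block structure)'' from inertia at $\infty$. That is not what the paper extracts. The image of wild inertia $P(\infty)$ in $\sF(k,2q-1,\psi)$ is shown to be an \emph{elementary abelian} $p$-group $Q\cong\F_q\oplus\F_q$, acting on $V\oplus\triv$ as ${\rm Reg}_{\F_q}\oplus{\rm Reg}_{\F_q}$; this is the content of Lemma~5.1 and Corollary~5.2. Separately, a \emph{semisimple} element of order $2q-1$ with all distinct eigenvalues comes not from $\sG$ at all (which is lisse on $\A^1$, so there is no ``tame behaviour at $0$''), but from local monodromy at $0$ of the hypergeometric sheaf $\sH_{2q-1}$, which only \emph{normalizes} $G_{geom}$. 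Both of these---the rank-$2$ elementary abelian $p$-group inside $G_{geom}$ with its prescribed character, and the normalizing regular semisimple element---are the precise constraints that drive the entire group-theoretic argument (tensor indecomposability, primitivity, finiteness via Howe's weight-multiplicity-one theorem, and the $p$-rank bounds in cross characteristic). A single unipotent element with given Jordan type would not suffice for any of Lemmas~9.1--9.3 or Theorems~9.10--9.11.

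\medskip
\textbf{Arithmetic monodromy.} Your plan to detect the $\sym(2q)/\alt(2q)$ sign via the determinant cannot work: $G_{arith}\subset\SO(2q-1)$, so the determinant is identically $1$ and carries no information. The paper instead \emph{reuses} the third-moment calculation. One checks that $\sym(2q)$ acts on the line $(V^{\otimes3})^{\alt(2q)}$ by $\sgn$ (a short Littlewood--Richardson computation), so by equidistribution the empirical third moment over degree-$d$ extensions of $k$ tends to $\pm1$ according to whether Frobenius lands in $\alt(2q)$ or its complement. But the proof of Theorem~7.1 already shows that the empirical third moment over $L$ equals $\chi_{2,L}(-1)+O(1/\sqrt{\#L})$. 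Comparing these forces $G_{arith}=\alt(2q)$ exactly when $-1$ is a square in $k$. This is how the $\chi_2(-1)$ enters, not through Gauss-sum bookkeeping on a transposition trace.
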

  
  \begin{rmk}The traces of elements of $\alt(n)$ (respectively of $\sym(n)$) in its deleted permutation representation (respectively in every irreducible representation) are integers. One sees easily (look at the action of $Gal(\Q(\zeta_p)/\Q)$) that the local system $\sG(k,2q-1,\psi)$ has traces which all lie in $\Q$,
  but as mentioned in the introduction, we do not know a direct proof that these traces all lie in $\Z$.
  \end{rmk} 
  
 \section{Basic facts about $\sH_n$}
 In this section, we assume that $n \ge 3$ is odd and that $n(n-1)$ is prime to $p$. The geometric local monodromy at $0$ is tame, and a topological generator of the tame inertia group $I(0)^{tame}$, acting on $\sH_n$, has as eigenvalues all the roots of unity of order dividing $n$. 
 
 The geometric local monodromy at $\infty$ is the direct sum
 $$\sL_{\chi_2}\oplus W; \ \ W{\rm\  has\  rank\ }n-1,\ {\rm and\  all\ slopes\ }1/(n-1).$$

Because $n$ is odd, the local system $\sH_n$ is (geometrically) orthogonally self dual, and $\det(\sH_n)$ is geometrically trivial (because trivial at $0$, lisse on $\G_m$, and all $\infty$ slopes are $\le 1/(n-1) < 1$). Therefore $\det(W)$ is geometrically $\sL_{\chi_2}$. From \cite[8.6.4 and 8.7.2]{Ka-ESDE}, we see that up to multiplicative translation, the
geometric isomorphism class is determined entirely by its rank $n-1$ and its determinant $\sL_{\chi_2}$. Because $n-1$ is even and prime to $p$, it follows that up to multiplicative translation, the
geometric isomorphism class of $W$ is that of the $I(\infty)$-representation of the Kloosterman sheaf
$$Kl_{n-1}:=Kl(\psi;{\rm all\ char's\ of\  order\  dividing\ }n-1).$$

By \cite[5.6.1]{Ka-GKM}, we have a global Kummer direct image geometric isomorphism 
$$Kl_{n-1} \cong [n-1]_\star \sL_{\psi_{n-1}},$$
where we write $\psi_{n-1}$ for the additive character $x \mapsto \psi((n-1)x)$.
Therefore, up to multiplicative translation, the
geometric isomorphism class of $W$ is that of $ [n-1]_\star \sL_\psi$.
Pulling back by $[n-1]$, which does not change the restriction of $W$ to the wild inertia group $P(\infty)$, we get
$$[n-1]^\star W \cong \oplus_{\zeta \in \mu_{n-1}}\sL_{\psi(\zeta x)}.$$

A further pullback by $n$'th power, which also does not change the restriction of $W$ to $P(\infty)$, gives
$$[n-1]^\star [n]^\star W \cong \oplus_{\zeta \in \mu_{n-1}}\sL_{\psi(\zeta x^n)}.$$
Thus we find that the $I(\infty)$ representation attached to a multiplicative translate of 
 $[n-1]^\star \sF(k,n,\psi)$ is the direct sum
 $$\triv \bigoplus _{\zeta \in \mu_{n-1}}\sL_{\psi(\zeta x^n)}= \bigoplus _{\alpha \in \mu_{n-1}\cup \{0\}}\sL_{\psi(\alpha x^n)}.$$
 
This description shows that the image of $P(\infty)$ in the $I(\infty)$-representation attached to $\sF(k,n,\psi)$ is an abelian group killed by $p$.
 
\begin{lem}Let $L/\F_p$ be a finite extension which contains the $n-1$'st roots of unity. Denote by $V \subset L$
the additive subgroup of $L$ spanned by the $n-1$'st roots of unity. Denote by
$V^\star$ the Poincar\'{e} dual of $V$:
$$V^\star := \Hom_{\F_p}(V, \mu_p(\overline{\Q_\ell})).$$
Then the image of $P(\infty)$ in the $I(\infty)$-representation attached to $\sF(k,n,\psi)$ is $V^\star$, and the representation restricted to $V^\star$ is the direct sum of
$$\triv \bigoplus_{\zeta \in \mu_{n-1}(L)} ({\rm evaluation\ at\ }\zeta)=\bigoplus_{\alpha \in \mu_{n-1}(L)\cup \{0\}} ({\rm evaluation\ at\ }\alpha).$$
\end{lem}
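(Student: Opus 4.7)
The plan is to use Artin--Schreier theory. From the analysis just preceding the lemma, the $I(\infty)$-representation attached to $\sF(k,n,\psi)$ (up to a multiplicative translation, which does not alter the $P(\infty)$-restriction) is
$$\triv \oplus \bigoplus_{\zeta \in \mu_{n-1}(L)} \sL_{\psi(\zeta x^n)}.$$
Each Artin--Schreier summand restricts on $P(\infty)$ to a continuous character $\chi_\zeta\colon P(\infty) \to \mu_p(\overline{\Q_\ell})$. The task is thus to compute the common image of these characters and to exhibit the representation as the claimed direct sum of evaluations.

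Next I would invoke the Artin--Schreier correspondence
$$\Hom_{\rm cts}(P(\infty),\mu_p(\overline{\Q_\ell})) \;\cong\; K_\infty/\{y^p-y\,:\,y \in K_\infty\}, \qquad K_\infty=\overline{k}((1/x)),$$
under which $\chi_\zeta$ is represented by the class of $\zeta x^n$. Pontryagin duality between elementary abelian $p$-groups and $\F_p$-subspaces of their character groups then gives that the image of $P(\infty)$ under the joint map $(\chi_\zeta)_\zeta$ is canonically $\Hom_{\F_p}(S,\mu_p)$, where $S$ is the $\F_p$-subspace spanned by the classes of the $\zeta x^n$. Since any $\F_p$-linear combination has the shape $\alpha x^n$ with $\alpha = \sum c_\zeta \zeta \in V$, the subspace $S$ is the image of the natural $\F_p$-linear map $\phi\colon V \to K_\infty/\{y^p-y\}$, $\alpha \mapsto \alpha x^n$.

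The one substantive step is to verify that $\phi$ is injective, which is exactly where the hypothesis $\gcd(n,p)=1$ is used. For $0 \neq \alpha \in V \subseteq L$ one shows that $\alpha x^n$ cannot equal $y^p-y$ for any Laurent series $y = \sum_{j \leq J} b_j x^j \in K_\infty$: matching coefficients of $x^n$ forces $b_n = -\alpha \neq 0$, and then the Frobenius term $b_n^p x^{np}$ must itself be cancelled by some $b_{np} x^{np}$, which iterates to produce infinitely many positive-exponent terms in $y$, contradicting the Laurent-series hypothesis. Equivalently, the Swan conductor at $\infty$ of the Artin--Schreier character attached to $\alpha x^n$ with $\gcd(n,p)=1$ equals $n>0$, so the character is nontrivial. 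Granting this injectivity, $V \xrightarrow{\sim} S$, hence the image of $P(\infty)$ is canonically $\Hom_{\F_p}(V,\mu_p)=V^\star$; the $\zeta$-th summand of the resulting $V^\star$-representation is tautologically evaluation at $\zeta \in V$, yielding the asserted decomposition. The main technical content is this injectivity / nonvanishing of the Swan conductor; everything else is a formal unwinding of Artin--Schreier duality.
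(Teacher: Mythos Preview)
Your proof is correct and follows essentially the same route as the paper's: both reduce to showing that the map $\alpha \mapsto \sL_{\psi(\alpha x^n)}|_{P(\infty)}$ is injective on $V$, then conclude by Pontryagin duality. The only stylistic difference is that where you invoke the Swan conductor (or the explicit Laurent-series argument) to show nontriviality for $\alpha \neq 0$, the paper instead observes that a $p$-torsion character trivial on $P(\infty)$ must factor through the prime-to-$p$ tame quotient $I(\infty)/P(\infty)$ and is therefore trivial on all of $I(\infty)$, forcing $\alpha=0$.
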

\begin{proof}Each of the characters $\sL_{\psi(\alpha x^n)}$ of $I(\infty)$ has order dividing $p$. Given an $n$-tuple
of elements $(a_\alpha)_{\alpha \in \mu_{n-1}(L)\cup \{0\}}$, consider the character
$$\Lambda :=\otimes_{\alpha \in \mu_{n-1}(L)\cup \{0\}}(\sL_{\psi(\alpha x^n)})^{\otimes a_\alpha}=$$
$$=\sL_{\psi((\sum_{\alpha \in \mu_{n-1}(L)\cup \{0\}}a_\alpha \alpha)x^n)}.$$
The following conditions are equivalent.
\begin{itemize}
\item[(1)]$\sum_{\alpha \in \mu_{n-1}(L)\cup \{0\}}a_\alpha \alpha=0$.
\item[(2)]The character $\Lambda$ is trivial on $I(\infty)$.
\item[(3)]The character $\Lambda$ is trivial on $P(\infty)$.
\end{itemize}
Indeed, it is obvious that $(1)\implies (2) \implies (3)$. If (3) holds, then for $A:=\sum_{\alpha \in \mu_{n-1}(L)\cup \{0\}}a_\alpha \alpha$, we have that $\sL_{\psi(Ax)}$ is trivial on $P(\infty)$, so is a character of $I(\infty)/P(\infty) = I(\infty)^{tame}$, a group of order prime to $p$. But $\sL_{\psi(Ax)}$ has order dividing $p$, so is trivial on $I(\infty)$, hence $A=0$.

This equivalence shows that the character group of the image of $P(\infty)$ is indeed the $\F_p$ span of the $\alpha$'s, i.e., it is $V$. The rest is just Poincar\'{e} duality of finite abelian groups.
\end{proof}

\section {basic facts about $\sH_{2q-1}$}
Taking $n=2q-1$, the geometric local monodromy at $0$ of $\sH_{2q-1}$ is tame, and a topological generator of the tame inertia group $I(0)^{tame}$, acting on $\sH_n$, has as eigenvalues all the roots of unity of order dividing $2q-1$. 

Turning now to the action of $P(\infty)$, we have
\begin{lem}Denote by $\zeta_{2q-2} \in \F_{q^2}$ a primitive $2q-2$'th root of unity. In the $I(\infty)$-representation attached to $\sF(k,2q-1,\psi)$, the character group $V$ of the image of $P(\infty)$ is the $\F_p$-space
$$V= \F_q \oplus \zeta_{2q-2}\F_q.$$
Fix a nontrivial additive character $\psi_0$ of $\F_q$, and denote by $\psi_1$ the nontrivial additive character of $\F_{q^2}$ given by
 $$\psi_1:=\psi_0 \circ \Trace_{\F_{q^2}/\F_q}.$$
Then the image $V^\star$ of $P(\infty)$ is itself isomorphic to $V$, and the representation of $P(\infty)$ is the direct sum of the characters
$$\oplus_{\alpha \in \F_q}\psi_1(\alpha x) \bigoplus \oplus_{\beta \in \F_q^\times}\psi_1(\zeta_{2q-2}\beta x).$$
\end{lem}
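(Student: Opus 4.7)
The plan is to deduce this directly from the previous lemma applied with $n = 2q-1$, so that $n-1 = 2(q-1)$. The only work is to make the set $\mu_{n-1}\cup\{0\}$, its $\F_p$-span, and its Poincaré dual fully explicit.

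First I would verify that $L = \F_{q^2}$ is a valid choice of base field. Since $q$ is odd, $2(q-1)$ divides $q^2-1$, so $\mu_{2q-2} \subset \F_{q^2}^\times$; indeed $\zeta_{2q-2}^{q-1}$ has order $2$, hence equals $-1$, so
\[
\mu_{2q-2}(\F_{q^2}) \;=\; \mu_{q-1}(\F_{q^2}) \,\sqcup\, \zeta_{2q-2}\cdot\mu_{q-1}(\F_{q^2})\;=\;\F_q^\times \,\sqcup\, \zeta_{2q-2}\F_q^\times.
\]
Consequently
\[
\mu_{2q-2}(\F_{q^2})\cup\{0\} \;=\; \F_q \,\sqcup\, \zeta_{2q-2}\F_q^\times,
\]
a set of cardinality $2q-1$ as a sanity check.

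Next I would compute $V$, the $\F_p$-span of this set inside $\F_{q^2}$. Since $\F_q$ and $\zeta_{2q-2}\F_q$ are each $\F_p$-subspaces of $\F_{q^2}$, we get $V = \F_q + \zeta_{2q-2}\F_q$. The sum is direct: any nontrivial intersection would force $\zeta_{2q-2} \in \F_q^\times$, contradicting that $\zeta_{2q-2}$ has order $2(q-1) \nmid q-1$. A dimension count over $\F_p$ then gives $\dim_{\F_p} V = 2\log_p q = \dim_{\F_p}\F_{q^2}$, so $V = \F_{q^2}$ as an $\F_p$-vector space, with the stated direct sum decomposition $V = \F_q \oplus \zeta_{2q-2}\F_q$.

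Finally, to identify $V^\star = \Hom_{\F_p}(\F_{q^2},\mu_p)$ with $V$ and translate the description from the previous lemma, I would use the non-degenerate $\F_p$-bilinear pairing
\[
\F_{q^2}\times\F_{q^2}\longrightarrow\mu_p,\qquad (v,w)\longmapsto \psi_1(vw)=\psi_0(\Trace_{\F_{q^2}/\F_q}(vw)),
\]
which is non-degenerate because $\psi_0$ is nontrivial and the trace form on $\F_{q^2}/\F_q$ is non-degenerate. This pairing realizes $V^\star \cong V$ in such a way that ``evaluation at $\alpha\in V$'' corresponds to the character $x\mapsto \psi_1(\alpha x)$. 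Feeding the explicit set $\mu_{2q-2}(\F_{q^2})\cup\{0\} = \F_q \sqcup \zeta_{2q-2}\F_q^\times$ into the direct sum decomposition supplied by the previous lemma yields exactly
\[
\bigoplus_{\alpha\in\F_q}\psi_1(\alpha x)\;\bigoplus\;\bigoplus_{\beta\in\F_q^\times}\psi_1(\zeta_{2q-2}\beta x),
\]
which is the asserted $P(\infty)$-representation. There is no real obstacle here; the only point requiring a little care is confirming $\zeta_{2q-2}\notin\F_q$ so that the direct sum $V = \F_q\oplus \zeta_{2q-2}\F_q$ is genuine and the element count matches $\dim_{\F_p}\F_{q^2}$.
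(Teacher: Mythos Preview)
Your proposal is correct and follows essentially the same route as the paper: apply the previous lemma with $n=2q-1$, identify $\mu_{2(q-1)}(\F_{q^2})=\F_q^\times\sqcup\zeta_{2q-2}\F_q^\times$, take the $\F_p$-span to get $V=\F_q\oplus\zeta_{2q-2}\F_q$, and then translate ``evaluation at $\alpha$'' into the characters $x\mapsto\psi_1(\alpha x)$ via the trace pairing. The only cosmetic difference is that the paper verifies the character description more concretely---checking that $\Trace_{\F_{q^2}/\F_q}(\zeta_{2q-2})=0$ so that $\psi_1(\alpha x)$ for $\alpha\in\F_q$ is trivial on $\zeta_{2q-2}\F_q$ and restricts to all characters of $\F_q$, and symmetrically for the other summand---whereas you invoke global non-degeneracy of the pairing $(v,w)\mapsto\psi_1(vw)$ on $\F_{q^2}$; both arguments arrive at the same identification.
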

\begin{proof}When $n=2q-1$, then $n-1=2(q-1)$. The field $\F_{q^2}$ contains the $2(q-1)$'th roots of unity. The group
$\mu_{2(q-1)}(\F_{q^2})$ contains the subgroup $ \mu_{q-1}(\F_{q^2})=\F_q^\times$ with index $2$, the other coset being
$\zeta_{2(q-1)}\F_q^\times$. Thus the $\F_p$ span of $\mu_{2(q-1)}(\F_{q^2})$ inside the additive group of $\F_{q^2}$ is indeed the asserted $V$. The characters $\psi_1(\alpha x)$, as $\alpha$ varies over $\F_q$, are each trivial on $ \zeta_{2q-2}\F_q$ (because $\Trace_{\F_{q^2}/\F_q}(\zeta_{2q-2})=0$) and give all the additive characters of $\F_q$ (on which $\Trace_{\F_{q^2}/\F_q}$ is simply the map $x \mapsto 2x$). The characters $ \psi_1(\zeta_{2q-2}\beta x)$, as $\beta$ varies over $\F_q$, are trivial on $\F_q$ (because $\Trace_{\F_{q^2}/\F_q}(\zeta_{2q-2})=0$)  and give all the characters of  $\zeta_{2q-2}\F_q$ (because $\zeta_{2(q-1)}^2$ lies in $\F_q^\times$). 
\end{proof}

\begin{cor}The image of $P(\infty)$ in the $I(\infty)$-representation attached to $$\sF(k,2q-1,\psi) \oplus \triv$$ is the direct sum 
$$V= \F_q \oplus \zeta_{2q-2}\F_q$$ 
acting through the representation $${\rm Reg}_{\F_q}\oplus {\rm Reg}_{\zeta_{2q-2}\F_q}.$$
\end{cor}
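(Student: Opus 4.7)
The plan is to deduce this directly from the preceding lemma by observing that the trivial summand exactly fills in the one missing character. Lemma 5.1 identifies the image of $P(\infty)$ with $V^\star$, and decomposes the $P(\infty)$-representation on $\sF(k,2q-1,\psi)$ as
$$\bigoplus_{\alpha \in \F_q}\psi_1(\alpha x) \;\oplus\; \bigoplus_{\beta \in \F_q^\times}\psi_1(\zeta_{2q-2}\beta x).$$
The first step is to rewrite the trivial character as $\psi_1(\zeta_{2q-2}\cdot 0\cdot x)$. Then appending $\triv$ to the above decomposition simply extends the index of the second sum from $\F_q^\times$ to all of $\F_q$, producing
$$\bigoplus_{\alpha \in \F_q}\psi_1(\alpha x) \;\oplus\; \bigoplus_{\beta \in \F_q}\psi_1(\zeta_{2q-2}\beta x).$$

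The second step is to identify each block with the asserted regular representation. For the first block, recall from the proof of Lemma 5.1 that the characters $\psi_1(\alpha x)$ with $\alpha \in \F_q$ are trivial on the summand $\zeta_{2q-2}\F_q$ of $V$ (because $\Trace_{\F_{q^2}/\F_q}(\zeta_{2q-2})=0$) and restrict to $\F_q \subset V$ to give all of its $p$-characters (the restriction of $\Trace_{\F_{q^2}/\F_q}$ to $\F_q$ is multiplication by $2$, a bijection in odd characteristic). Hence each character of $\F_q$ appears with multiplicity one, which is by definition the regular representation $\mathrm{Reg}_{\F_q}$, pulled back to $V^\star$ through the quotient $V^\star \twoheadrightarrow \Hom(\F_q,\mu_p)$. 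The second block is handled identically, using that $\zeta_{2q-2}^2 \in \F_q^\times$ so that $\beta \mapsto \psi_1(\zeta_{2q-2}\beta\cdot)$ runs bijectively through the characters of $\zeta_{2q-2}\F_q$, giving $\mathrm{Reg}_{\zeta_{2q-2}\F_q}$.

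There is no real obstacle here; this is purely a repackaging of Lemma 5.1 together with the observation that appending $\triv$ supplies the single character $\beta=0$ that was excluded there.
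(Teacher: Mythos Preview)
Your proof is correct and is exactly the intended argument: the paper states this corollary without proof, treating it as immediate from Lemma 5.1, and your derivation spells out precisely why---adding $\triv$ supplies the missing $\beta=0$ character and each block is then the regular representation of the corresponding summand of $V$.
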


\section{Basic facts about the group $G_{geom}$ for $\sF(k,2q-1,\psi)$}
Recall that $G_{geom}$ is the Zariski closure in $\SO(2q-1,\overline{\Q_\ell})$ of the image of $\pi_1^{geom}:=\pi_1(\A^1/\overline{\F_p})$ in the representation attached to $\sF(k,2q-1,\psi)$. Thus $G_{geom}$ is an irreducible subgroup of $\SO(2q-1,\overline{\Q_\ell})$.
\begin{thm}We have the following two results.
\begin{itemize}
\item[(1)]$G_{geom}$ is normalized by an element of $\SO(2q-1,\overline{\Q_\ell})$ whose eigenvalues are all the roots of unity of order dividing $2q-1$ in $\overline{\Q_\ell}$.
\item[(2)]$G_{geom}$ contains a subgroup isomorphic to $\F_q \oplus \F_q$, acting through the virtual representation
$${\rm Reg}_{\rm first}\oplus {\rm Reg}_{\rm second} - \triv.$$
\end{itemize}
\end{thm}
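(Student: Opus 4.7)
\smallskip
\noindent\textbf{Proof proposal.} My plan is to transfer the explicit local information we already possess — the $I(0)^{tame}$ eigenvalues for $\sH_n$ from Section~4 and the $P(\infty)$-action from Corollary~5.2 — into global statements about $G_{geom}$, using the relation (from the end of Section~2) that $\sF(k,2q-1,\psi)|_{\G_m}$ is geometrically a multiplicative translate of $[n]^\star\sH_n$, where $n=2q-1$.

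For (1), I would first observe that $G_{geom}(\sF)=G_{geom}(\sF|_{\G_m})$, since the open immersion $j:\G_m\inj\A^1$ induces a surjection $\pi_1^{geom}(\G_m)\surj\pi_1^{geom}(\A^1)$. Multiplicative translation is an automorphism of $\G_m$ and so preserves $G_{geom}$, while the Kummer cover $[n]:\G_m\to\G_m$ has deck group $\mu_n$, so $G_{geom}([n]^\star\sH_n)$ sits as a normal subgroup of index dividing $n$ in $G_{geom}(\sH_n)$. Up to conjugation inside $\mathrm{GL}(2q-1,\overline{\Q_\ell})$ one can therefore identify
\[
G_{geom}(\sF)\;=\;G_{geom}([n]^\star\sH_n)\;\triangleleft\;G_{geom}(\sH_n),
\]
with cyclic quotient of order dividing $2q-1$. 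A topological generator $\gamma_0$ of $I(0)^{tame}$ maps to an element of $G_{geom}(\sH_n)$ whose eigenvalues are exactly the $(2q-1)$-th roots of unity; since $2q-1$ is odd their product is $1$, so $\gamma_0\in\SO(2q-1,\overline{\Q_\ell})$, and as an element of $G_{geom}(\sH_n)$ it automatically normalizes $G_{geom}(\sF)$. This gives (1).

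For (2), the claim reads off almost directly from Corollary~5.2: the image of $P(\infty)$ in the $I(\infty)$-representation attached to $\sF\oplus\triv$ is $V=\F_q\oplus\zeta_{2q-2}\F_q$, which as an abelian group is $\cong\F_q\oplus\F_q$, acting through $\mathrm{Reg}_{\F_q}\oplus\mathrm{Reg}_{\zeta_{2q-2}\F_q}$. Removing the one trivial summand that corresponds to the added $\triv$ yields the virtual representation $\mathrm{Reg}_{\mathrm{first}}\oplus\mathrm{Reg}_{\mathrm{second}}-\triv$ on $\sF$ itself. The only point to verify is that the resulting homomorphism $V\to G_{geom}(\sF)$ is injective; but on $\sF$ every nontrivial character of each $\F_q$ summand of $V$ occurs, so only $0\in V$ can act trivially. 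The image is then the required subgroup of $G_{geom}(\sF)$ isomorphic to $\F_q\oplus\F_q$ with the stated action.

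The only real bookkeeping hurdle is in part~(1): multiplicative translation and $[n]^\star$ preserve $G_{geom}$ only up to conjugation, so the identifications must be made consistently in order to exhibit $\gamma_0$ as a concrete element of a single copy of $\SO(2q-1,\overline{\Q_\ell})$ that normalizes $G_{geom}(\sF)$. Once this set-up is fixed, both assertions are short consequences of the local analysis in Sections~4 and~5.
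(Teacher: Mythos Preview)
Your proposal is correct and follows exactly the same route as the paper's own proof: use the fact that $\sF|_{\G_m}$ is geometrically a multiplicative translate of $[2q-1]^\star\sH_{2q-1}$ to exhibit $G_{geom}(\sF)$ as a normal subgroup of $G_{geom}(\sH_{2q-1})$, then invoke the $I(0)^{tame}$ eigenvalues for~(1) and Corollary~5.2 for~(2). Your write-up simply supplies details the paper leaves implicit (the surjection $\pi_1^{geom}(\G_m)\surj\pi_1^{geom}(\A^1)$, the determinant check placing $\gamma_0$ in $\SO$, and the injectivity of $V\to G_{geom}$); the conjugation bookkeeping you flag is handled in the paper by the blanket remark that $\sH_{2q-1}$ is itself orthogonally self-dual with trivial determinant, so $G_{geom}(\sH_{2q-1})\subset\SO(2q-1)$ already.
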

\begin{proof}The local system $\sF(k,2q-1,\psi)$ is, geometrically, a multiplicative translate of the Kummer pullback $[2q-1]^\star \sH_{2q-1}$. Therefore $G_{geom}$ is a normal subgroup of the group $G_{geom}$ for $\sH_{2q-1}$,
so is normalized by any element of this possibly larger group. As already noted, local monodromy at $0$ for $\sH_{2q-1}$ is an element of the asserted type. This proves (1). Statement (2) is just a repeating of what was proved in the previous lemma.
\end{proof}

\section{The third moment of  $\sF(k,2q-1,\psi)$ and of  $\sG(k,2q-1,\psi)$}
Let us recall the general set up. We are given a lisse $\sG$ on a lisse, geometrically connected curve $C/k$. We suppose that $\sG$ is $\iota$-pure of weight zero, for an embedding $\iota$ of $\overline{\Q_\ell}$ into $\C$. We denote by $V$ the $\overline{\Q_\ell}$-representation given by $\sG$, and by $G_{geom}$ the Zariski closure in $\GL(V)$ of the image of $\pi_1^{geom}(C/k)$. For an integer $n \ge 1$, the $n$'th moment of $\sG$ is the dimension of the space of invariants
$$M_n(\sG) := \dim((V^{\otimes n})^{G_{geom}}).$$

Recall \cite[1.17.4]{Ka-MMP} that we have an archimedean limit formula for $M_n(\sG)$ as the lim sup over finite extensions $L/k$ of the sums
$$(1/\#L)\sum_{t \in C(L)}(\Trace(Frob_{t,L}|\sG) )^n,$$
which we call the empirical moments.

\begin{thm} \label{third moment}For the lisse sheaf $\sG(k,2q-1,\psi)$ on $\A^1/k$, we have
$$M_3(\sG(k,2q-1,\psi))=1.$$
\end{thm}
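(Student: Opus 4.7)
The plan is to evaluate the empirical third moment directly, using the trace formula of Section 3 together with an elementary polynomial identity in characteristic $p$. Writing $T(t) := \Trace(Frob_{t,L}|\sG(k,2q-1,\psi))$ as a normalized exponential sum in $x \in L$, one cubes $T(t)$ and sums over $t \in L$; the $t$-sum forces $x+y+z=0$. Setting $z = -(x+y)$ and applying the identity
\[ x^{2q-1} + y^{2q-1} - (x+y)^{2q-1} = \frac{xy(x^{q-1}-y^{q-1})^2}{x+y}, \]
which follows in characteristic $p$ from $(x+y)^{2q} = (x^q+y^q)^2$ after clearing the $(x+y)$-denominator, transforms the empirical third moment into
\[ -\frac{\chi_{2,L}(-1)}{A(L,2q-1,\psi_{L/k})^3} \sum_{\substack{x,y \in L\\ x,y,x+y \ne 0}} \psi_{L/k}\!\left(\frac{xy(x^{q-1}-y^{q-1})^2}{x+y}\right) \chi_{2,L}(xy(x+y)). \]

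Next I substitute $x = ty$ to separate the two variables: the $\psi$-argument becomes $c(t) y^{2q-1}$ with $c(t) := t(t^{q-1}-1)^2/(t+1)$, and the $\chi$-factor becomes $\chi_{2,L}(y)\chi_{2,L}(t(t+1))$. For $t \in \F_q^\times$ we have $c(t)=0$ and the inner $y$-sum collapses to $\sum_{y \in L^\times} \chi_{2,L}(y) = 0$, so only $t \in L \setminus (\F_q^\times \cup \{0,-1\})$ survives. The factorization $t^{q-1}-1 = \prod_{\alpha \in \F_q^\times}(t-\alpha)$ yields $c(t) = t(t+1) h(t)^2$ with $h(t) = \prod_{\alpha \in \F_q^\times \setminus \{-1\}}(t-\alpha)$, so $\chi_{2,L}(c(t)) = \chi_{2,L}(t(t+1))$ on the support; this is the crucial cancellation $\chi_{2,L}(c(t))\chi_{2,L}(t(t+1)) = 1$ that strips all $t$-dependence from the $\chi$-factors.

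To compute the inner $y$-sum cleanly, I restrict to $L \supseteq \F_q$ with $\gcd(2q-1, |L|-1) = 1$, which exists in arbitrarily high degree because $\gcd(p-1, 2q-1) = 1$ (proved by the recursion $\gcd(p-1, 2p^e-1) = \gcd(p-1, 2p^{e-1}-1)$) forces $\mathrm{ord}_r(p) \ge 2$ for every prime $r \mid 2q-1$, and a CRT argument over these finitely many moduli then supplies admissible degrees. Under coprimality, the substitution $y \to y h(t)^{-e}$ with $e(2q-1) \equiv 2 \pmod{|L|-1}$ (and $e$ necessarily even since $2q-1$ is odd and $|L|-1$ even) reduces the inner $y$-sum to the quadratic Gauss sum $\chi_{2,L}(t(t+1)) g(\chi_{2,L}, \psi_{L/k})$. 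Summing over the $|L|-q$ surviving $t$ and using $A(L,2q-1,\psi_{L/k}) = -\chi_{2,L}(-1) g(\chi_{2,L}, \psi_{L/k})$ together with $A(L,\cdot)^2 = \chi_{2,L}(-1)|L|$, the empirical moment collapses to $\chi_{2,L}(-1)(1 - q/|L|)$, whose lim sup over such $L$ is exactly $1$.

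The main technical obstacle is the coprimality hypothesis. For $L$ without it, the inner $y$-sum decomposes into $d = \gcd(2q-1, |L|-1)$ twisted Gauss sums indexed by characters $\rho$ of $L^\times/(L^\times)^{2q-1}$; the nontrivial $\rho$ contribute $(\sum_t \rho(c(t)))$ times a Gauss sum of magnitude $\sqrt{|L|}$. Weil's bound on multiplicative character sums of the rational function $c(t)$ estimates each such $t$-sum as $O_q(\sqrt{|L|})$, and after division by $|A(L,\cdot)|^3 \sim |L|^{3/2}$ these terms are $o(1)$ and absorbed into the error. Thus the lim sup formula yields $M_3(\sG(k,2q-1,\psi)) = 1$ in full generality.
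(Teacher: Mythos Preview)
Your argument is correct and reaches the same conclusion by a genuinely more elementary route than the paper's.

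Both proofs begin identically: expand the empirical third moment, sum over $t$ to force $x+y+z=0$, and apply a polynomial identity. Your identity
\[
x^{2q-1}+y^{2q-1}-(x+y)^{2q-1}=\frac{xy(x^{q-1}-y^{q-1})^2}{x+y}
\]
is exactly the paper's Lemma in disguise: the paper writes the right side as $xy(x+y)\prod_{\alpha\in\F_q\setminus\{0,-1\}}(x-\alpha y)^2$, which is the same polynomial since $x^{q-1}-y^{q-1}=\prod_{\alpha\in\F_q^\times}(x-\alpha y)$ and one of those factors is $(x+y)$. (Note also that $2q-1=-1$ in characteristic $p$, so your formula for $A$ agrees with the paper's.)

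After the identity the two proofs diverge. The paper inserts an extra $\chi_{2,L}$ of the squared factor at an $O(q/\sqrt{\#L})$ cost, reinterprets the sum as $\sum_t\psi(t)\chi_2(-t)N_L(t)$ for the family of curves $\sC_t$, and controls the error via $\ell$-adic cohomology: vanishing of $H^2_c$ from wildness at $\infty$ and Deligne's weight bound on $H^1_c$. You instead dehomogenize via $x=ty$, observe that $c(t)=t(t+1)h(t)^2$ with $h\in\F_p[t]$ (so the quadratic characters cancel on the support), and evaluate the inner $y$-sum as a sum of $d$ Gauss sums indexed by the characters $\rho=\chi_2\tau$ with $\tau^d=1$. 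The $\tau=\triv$ term gives the main contribution $\chi_{2,L}(-1)(1-q/\#L)$, and the remaining $d-1$ terms are bounded by Weil's estimate for $\sum_t\overline\tau(c(t))$, which applies since $c$ has simple zeros at $0$ and $-1$ while $\tau$ has odd order $\ge 3$. This needs only the Riemann hypothesis for curves, not Deligne's Weil~II; that is what your approach buys.

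One caveat: your existence claim for extensions $L\supseteq\F_q$ of $k$ with $\gcd(2q-1,\#L-1)=1$ can fail for some base fields $k$ (e.g.\ $p=q=3$, $k=\F_{81}$: then $5\mid\#k-1$, hence $5\mid\#L-1$ for every $L/k$). This does no damage, since your general-$L$ argument via Weil already handles every $L$ and makes the coprime case redundant; but you should present the Weil argument as the actual proof and the coprime case as a motivating warm-up, rather than the other way around.
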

\begin{proof}Fix a finite extension $L/k$. For $t \in L$, we have
$$\Trace(Frob_{t,L}|\sG(k,2q-1,\psi))=(-1/A(L,2q-1,\psi_{L/k}))\sum_{x \in L}\psi_{L/k}(x^{2q-1}+tx)\chi_{2,L}(x),$$
with the twisting factor given explicitly as
$$A(L,2q-1,\psi_{L/k})=-\chi_{2,L}(-1)\sum_{x \in L^\times}\psi_{L/k}(x)\chi_{2,L}(x).$$
Write $g_L$ for the Gauss sum
$$g_L:=\sum_{x \in L^\times}\psi_{L/k}(x)\chi_{2,L}(x).$$
Then the empirical $M_3$ is the sum
$$(1/\#L)(\chi_{2,L}(-1)/g_L)^3\sum_{t \in L}\sum_{x,y,z \in L}(\psi_{L/k}(x^{2q-1}+y^{2q-1}+z^{2q-1}+t(x+y+z))\chi_{2,L}(xyz)=$$
$$=(\chi_{2,L}(-1)/g_L)^3\sum_{x,y,z \in L, x+y+z=0}\psi_{L/k}(x^{2q-1}+y^{2q-1}+z^{2q-1})\chi_{2,L}(xyz)=$$
$$=(\chi_{2,L}(-1)/g_L)^3\sum_{x,y \in L}\psi_{L/k}(x^{2q-1}+y^{2q-1}+(-x-y)^{2q-1})\chi_{2,L}(xy(-x-y)).$$
The key is now the following identity.
\begin{lem} In $ \F_q[x,y]$, we have the identity
$$x^{2q-1}+y^{2q-1}+(-x-y)^{2q-1}=xy(x+y)\prod_{\alpha \in \F_q \setminus \{0,-1\}}(x-\alpha y)^2.$$
If we write $q=p^f$, then collecting Galois-conjugate terms this is
$$xy(x+y)\prod_{h \in \sP_f}h(x,y)^2,$$
where $\sP_f$ is the set of irreducible $h(x,y) \in \F_p[x,y]$ which are homogeneous of degree dividing $f$, monic in $x$, other than $x$ or $x+y$.
\end{lem}
\begin{proof}Because $x^{2q-1}+y^{2q-1}+(-x-y)^{2q-1}$ is homogeneous of odd degree $2q-1$ and visibly divisible by $y$, it suffices to prove the inhomogenous identity, that in $\F_q[x]$ we have
$$x^{2q-1}+1-(x+1)^{2q-1}=x(x+1)\prod_{\alpha \in \F_q \setminus \{0,-1\}}(x-\alpha )^2.$$
The left side 
$$P(x):=x^{2q-1}+1-(x+1)^{2q-1}$$
has degree $2q$. 

So it suffices to show that for each $\alpha \in \F_q \setminus \{0,-1\}$,$P(x)$ is divisible by $(x-\alpha)^2$. The key point is that for $\beta \in \F_q$, we have
$$\beta^{2q-1}=\beta,$$
and for $\alpha \in \F_q^\times$ we have
$$\alpha^{2q-2}=1.$$
Thus for any $\beta \in \F_q$, we trivially have $P(\beta) = 0$. The derivative $P'(x)$ is equal to
$$P'(x)=-x^{2q-2} +(x+1)^{2q-2}.$$
So if both $\alpha$ and $\alpha +1$ lie in $\F_q^\times$, then $P'(\alpha) =-1 + 1=0.$
\end{proof}

With this identity in hand, we now return to the calculation of the empirical moment, which is now
$$(\chi_{2,L}(-1)/g_L)^3\sum_{x,y \in L}\psi_{L/k}(xy(x+y)\prod_{h \in \sP_f}h(x,y)^2)\chi_{2,L}(xy(-x-y)).$$
The set of $(x,y) \in \A^2(L)$ at which $\prod_{h \in \sP_f}h(x,y)=0$ has cardinality at most $(q-2)(\#L -1)$.
So the empirical sum differs from the modified empirical sum
$$(\chi_{2,L}(-1)/g_L)^3\sum_{x,y \in L}\psi_{L/k}(xy(x+y)\prod_{h \in \sP_f}h(x,y)^2)\chi_{2,L}(xy(-x-y)\prod_{h \in \sP_f}h(x,y)^2)$$
by a difference which is 
$$(\chi_{2,L}(-1)/g_L)^3({\rm a\ sum\ of\ at\  most \ }(q-2)(\#L -1)\ {\rm terms,\ each\ of\ absolute\ value\ }1).$$
So the difference in absolute value is at most $q/\sqrt(\#L)$, which tends to zero as $L$ grows (remember $q$ is fixed).
The modified empirical sum we now rewrite as
$$(\chi_{2,L}(-1)/g_L)^3\sum_{t \in L^\times}\psi_{L/k}(t)\chi_{2,L}(-t)N_L(t),$$
with $N_L(t)$ the number of $L$-points on the curve $\sC_t$ given by
$$\sC_t: xy(x+y)\prod_{h \in \sP_f}h(x,y)^2 =t.$$
Because $ xy(x+y)\prod_{h \in \sP_f}h(x,y)^2$ is homogeneous of degree $2q-1$ prime to $p$ and is not a $d$'th power for any $d \ge 2$, the curves $\sC_t$ are smooth and geometrically irreducible for all $t \neq 0$, cf. \cite[proof of 6.5]{Ka-PES}. Moreover, by the homogeneity, these curves are each geometrically isomorphic to $\sC_1$, indeed the family become constant after the tame Kummer pullback $[2q-1]^\star$. Thus for the structural map $\pi:\sC \rightarrow \G_m/\F_p$, $R^2\pi_!(\Q_\ell) = \Q_\ell(-1)$, $R^1\pi_!\Q_\ell$ is lisse of some rank $r$, tame at both $0$ and $\infty$, and mixed of weight $\le 1$, and all other $R^i\pi_!(\Q_\ell)=0$.

So our modified empirical moment is
$$(\chi_{2,L}(-1)/g_L)^3\sum_{t \in L^\times}\psi_{L/k}(t)\chi_{2,L}(-t)(\#L -\Trace(Frob_{t,L}|R^1\pi_!\Q_\ell )=$$
$$=(\chi_{2,L}(-1)/g_L)^3\sum_{t \in L^\times}\psi_{L/k}(t)\chi_{2,L}(-t)(\#L) $$
$$-(\chi_{2,L}(-1)/g_L)^3 \sum_{t \in L^\times}\Trace(Frob_{t,L}|\sL_{\chi(t)}\otimes \sL_{\chi_2(t)}\otimes R^1\pi_!\Q_\ell).$$
Remembering that $g_L^2 = \chi_{2,L}(-1)$, we see that the first sum is $\chi_{2,L}(-1)$. We now show that the second sum is $O(1/\sqrt{\#L})$, or equivalently that the sum
$$ \sum_{t \in L^\times}\Trace(Frob_{t,L}|\sL_{\psi}\otimes \sL_{\chi_2}\otimes R^1\pi_!\Q_\ell)$$
is $O(\#L)$. By the Lefschetz trace formula \cite{Gro-FL}, the second sum is 
$$\Trace(Frob_L|H^2_c(\G_m/\overline{\F_p}, \sL_{\psi}\otimes \sL_{\chi_2}\otimes R^1\pi_!\Q_\ell)$$
$$-\Trace(Frob_L|H^1_c(\G_m/\overline{\F_p}, \sL_{\psi}\otimes \sL_{\chi_2}\otimes R^1\pi_!\Q_\ell).$$
The $H^2_c$ group vanishes, because the coefficient sheaf is totally wild at $\infty$ (this because it is $\sL_\psi$ tensored with a lisse sheaf which is tame at $\infty$). The second sum is $O(\#L)$, by Deligne's fundamental estimate \cite[3.3.1]{De-Weil II} (because the coefficient sheaf of mixed of weight $\le 1$, so its $H^1_c$ is mixed of weight $\le 2$).

Thus the empirical moment is $\chi_{2,L}(-1)$ plus an error term which, as $L$ grows, is $O(1/\sqrt{\#L})$. So the lim sup is $1$, as asserted.
\end{proof}
\section{Exact determination of $G_{arith}$}
\begin{thm}Suppose known that $\sG(k,2q-1,\psi)$ has $G_{geom}=\alt(2q)$. Then its $G_{arith}$ is as asserted in Theorem 3.1, namely it is $\alt(2q)$ if $-1$ is a square in $k$, and is $\sym(2q)$ if $-1$ is not a square in $k$.
\end{thm}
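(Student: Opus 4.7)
The plan is to pin down the element $\epsilon \in \{\pm 1\}$ representing the image of $Frob_k$ in $G_{arith}/G_{geom}$, using an arithmetic refinement of Theorem \ref{third moment}. First I would identify the ambient normalizer $N := N_{\SO(2q-1, \overline{\Q_\ell})}(\alt(2q))$. For $2q \ge 5$ the group $\alt(2q)$ is simple with $\Out(\alt(2q)) = \Z/2$, except for $2q = 6$, where the exotic outer automorphism of $\alt(6)$ interchanges its two inequivalent $5$-dimensional irreducible representations and hence does not preserve the isomorphism class of $V$. Since the scalars in $\SO(2q-1)$ are trivial (as $2q-1$ is odd), this forces $N/\alt(2q) \hookrightarrow \sym(2q)/\alt(2q)$, and the unique extension of the $\alt(2q)$-action on $V$ to $\sym(2q)$ with trivial determinant is $V \otimes \sgn = V_{(2, 1^{2q-2})}$. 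Thus $G_{arith}$ is either $\alt(2q)$ or $\sym(2q)$ acting via $V_{(2, 1^{2q-2})}$, and the theorem reduces to showing $\epsilon = \chi_{2,k}(-1)$.

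Next, by Theorem \ref{third moment} the space $(V^{\otimes 3})^{G_{geom}}$ is one-dimensional; it is $G_{arith}$-stable, and $G_{arith}/G_{geom}$ acts on it through a $\{\pm 1\}$-valued character sending $Frob_k$ to $\epsilon$. Since $V$ is orthogonally self-dual, invariants and coinvariants under $G_{geom}$ coincide, so Poincar\'e duality identifies this one-dimensional line, up to a Tate twist, with $H^2_c(\A^1/\overline{k}, \sG^{\otimes 3})$. Consequently $Frob_L$ acts on $H^2_c(\A^1/\overline{k}, \sG^{\otimes 3})$ by the scalar $(\#L) \cdot \epsilon^{[L:k]}$. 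The Lefschetz trace formula, combined with Deligne's weight bound that $H^1_c(\A^1/\overline{k}, \sG^{\otimes 3})$ is mixed of weight $\le 1$, then yields
\[ \frac{1}{\#L}\sum_{t \in L}\Trace(Frob_{t,L}|\sG)^3 = \epsilon^{[L:k]} + O\bigl((\#L)^{-1/2}\bigr). \]
The proof of Theorem \ref{third moment} evaluates the same empirical moment as $\chi_{2,L}(-1) + O((\#L)^{-1/2})$; since $\chi_{2,L}(-1) = \chi_{2,k}(-1)^{[L:k]}$, taking $L/k$ of arbitrarily large odd degree forces the two $\pm 1$-valued main terms to agree, giving $\epsilon = \chi_{2,k}(-1)$ and completing the proof.

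The main obstacle will be the group-theoretic Step 1 --- identifying the normalizer precisely as $\sym(2q)$ in the representation $V_{(2, 1^{2q-2})}$ --- which requires knowing $\Out(\alt(2q))$ and, in the exceptional case $2q = 6$, ruling out the non-standard outer automorphism on representation-theoretic grounds. The subsequent cohomological matching is then a routine combination of Poincar\'e duality and Deligne's weight bounds.
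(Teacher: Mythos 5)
Your Step 1 (identifying the normalizer of $\alt(2q)$ in $\SO(2q-1)$ as $\sym(2q)$ acting by $V_{(2,1^{2q-2})}$, with the separate treatment of the exotic automorphism of $\alt(6)$) matches the paper, and your Step 3 (Lefschetz trace formula plus Deligne's weight bound on $H^1_c$, compared against the explicit main term $\chi_{2,L}(-1)$ over odd-degree extensions) is a perfectly good, slightly more hands-on substitute for the paper's appeal to the equidistribution theorem. But there is a genuine gap in between: you conflate the image of $Frob_k$ in the \emph{group} $G_{arith}/G_{geom}\subseteq\{\pm1\}$ with its image under the \emph{character} by which $G_{arith}/G_{geom}$ acts on the line $\L=(V^{\otimes3})^{G_{geom}}$. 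Your cohomological computation determines the latter (it shows the character takes the value $\chi_{2,k}(-1)$ at $Frob_k$), not the former. These agree only if that character is faithful, i.e.\ is $\sgn$ when $G_{arith}=\sym(2q)$. If the character were trivial, then $\epsilon=+1$ would be consistent with $G_{arith}=\sym(2q)$, and the case where $-1$ is a square in $k$ would remain unresolved; the case $\chi_{2,k}(-1)=-1$ is fine either way.

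So you must actually prove that $\sym(2q)$ acts on $\bigl((V_{(2q-1,1)}\otimes\sgn)^{\otimes3}\bigr)^{\alt(2q)}$ by $\sgn$, equivalently that $V_{(2q-1,1)}^{\otimes3}$ contains the trivial representation of $\sym(2q)$ but not $\sgn$ (so that the one-dimensional space of $\alt(2q)$-invariants carries the trivial character before the $\sgn^{\otimes3}$ twist). This is exactly what the paper establishes with the Littlewood--Richardson rule. A quicker route: $V_{(2q-1,1)}^{\otimes3}$ embeds in the permutation module $\C[X^3]$ with $X=\{1,\dots,2q\}$, every point stabilizer for $\sym(2q)$ on $X^3$ contains a transposition since $2q-3\ge2$, hence $\sgn$ does not occur in $\C[X^3]$ at all. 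Either way, this step has to appear explicitly; as written your reduction ``the theorem reduces to showing $\epsilon=\chi_{2,k}(-1)$'' silently assumes it.
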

\begin{proof}For $q > 3$, the outer automorphism group of $\alt(2q)$ has order $2$, induced by the conjugation action of $\sym(2q)$. Therefore the normalizer of $\alt(2q)$ in $\SO(2q-1)$ (viewed there by its deleted permutation representation) is the group $\sym(2q)$ (viewed in $\SO(2q-1)$ by (deleted permutation representation)$\otimes$sgn).  If $q=3$, the automoprhism group is slightly bigger but
the stabilizer of the character of the deleted permutation module is just $\sym(2q)$. [Indeed, either of the exotic automorphisms of $\alt(6)$ maps the cycle $(123)$ to an element which in $\sym(6)$ is conjugate to $(123)(456)$.
The element $(123)$ has trace $2$, whereas $(123)(456)$ has trace $-1$ (both viewed in $\SO(5)$ by the deleted permutation representation)].
Since we have a priori inclusions
$$G_{geom} = \alt(2q) \triangleleft G_{arith} \subset \SO(2q-1),$$
the only choices for $G_{arith}$ are $\alt(2q)$ or $\sym(2q)$.

Denoting by $V$ the representation of $G_{arith}$ given by $\sG(k,2q-1,\psi)$, the action of $G_{arith}$ on line
$$\L := (V^{\otimes 3})^{G_{geom}}$$
is a character of $G_{arith}/G_{geom}$. We claim that this character is the sign character  $\sgn$ of $G_{arith} \subset \sym(2q)$. To see this, we argue as follows.

For any $n \ge 3$, denoting by $V_{n}$ the deleted permutation representation of $\sym(n+1)$, one knows that 
$$(V_{n}^{\otimes 3})^{\sym(n+1)} =(V_{n}^{\otimes 3})^{\alt(n+1)}$$
is one dimensional, 
[Indeed, if $S^\lambda$ denotes the complex irreducible representation of $\sym({n+1})$ labeled by 
the partition $\lambda \vdash (n+1)$, then $V_n = S^{(n,1)}$ and $\sgn = S^{(1^{n+1})}$. An application of the Littlewood-Richardson
rule to 
$$S^\lambda \otimes {\mathrm {Ind}}^{\sym({n+1})}_{\sym(n}(S^{(n)}) = S^{\lambda} \oplus (S^\lambda \otimes V_n)$$ 
yields
$$V_n \otimes V_n = S^{(n+1)} \oplus S^{(n,1)} \oplus S^{(n-1,2)} \oplus S^{(n-1,1^2)}$$
see \cite[Exercise 4.19]{FH}. Further similar applications of the Littlewood-Richardson rule then show that 
$V_n \otimes V_n \otimes V_n$ contains the trivial representation $S^{(n+1)}$ once but does not contain $\sgn$.]
Hence that the action of $\sym(n+1)$ on $$((V_{n}\otimes \sgn)^{\otimes 3})^{\alt(n+1)}$$
is $\sgn^{3} =\sgn$. Taking $n=2q-1$, we get the claim.

Now apply Deligne's equidistribution theorem, in the form \cite[9.7.10]{Ka-Sar}. It tells us that if $G_{arith}/G_{geom}$ has order $2$ instead of $1$, then the Frobenii $Frob_{t,L}$ as $L$ runs over larger and larger extensions of $k$ of even (respectively odd) degree become equidistribued in the conjugacy classes of $G_{arith}$ lying in $G_{geom}$ (respectively lying in the other coset $G_{arith} \setminus G_{geom}$). Then as $L/k$ runs over finite extensions of {\bf odd} degree, the empirical third moment will be $-1 + O(1/\sqrt{\#L})$. On the other hand, if $G_{arith}=G_{geom}$, then every empirical moment will be $1 + O(1/\sqrt{\#L})$.

\section{Identifying the group}

In this section, we use the information obtained earlier to identify the group. We choose a field embedding
$\overline{\Q_\ell} \subset \C$, so that we may view $G:=G_{geom}$ as an algebraic group over $\C$.

So let $p$ be an odd prime with $q$ a power of $p$.
We start by assuming that $G$ is an irreducible, Zariski closed subgroup of 
$\SO(2q-1,\C)=\SO(V)$ such that $G$ contains 
 $Q$, an elementary abelian subgroup of order $q^2$.  Moreover, we assume that we may
write $Q=Q_1 \times Q_2$ so  that $V=V_0 \oplus V_1 \oplus V_2$
where $V_0$ is a trivial $Q$-module,  $V_0 \oplus V_i$ is the regular representation
for $Q_i$ and $Q_i$ acts trivially on the other summand.    Moreover, we assume
that $G$ is a quasi-$p$ group (in the sense that the subgroup generated by its $p$-elements is Zariski dense), cf. Lemma \ref{dense}.

\begin{lem}\label{tensor}  $V$ is tensor indecomposable for $Q_1$. More
precisely, $V \ne X_1 \otimes X_2$ where the $X_i$ are
$Q_1$-modules each of dimension $\ge 2$.
\end{lem}

\begin{proof} We argue by contradiction. Suppose $V = X_1 \otimes X_2$ with each $X_i$ of
(necessarily odd) dimensional $\ge 2$ .    Let $\chi_{X_i}$ be the character of $Q_1$ on $X_i$.
So $\chi_{X_1}= a_0\triv+ \sum a_{\chi} \chi$  and $\chi_{X_2} 
=b_0\triv + \sum b_{\chi} \chi$ where the $\chi$ are the
nontrivial characters of $Q_1$. 

We first reduce to the case when both $a_0,b_0$ are nonzero.
The multiplicity of the trivial character of $Q_1$ in $V$ is $q$, so we have
$$q  = a_0b_0 +\sum_{\chi}a_\chi b_{\overline{\chi}}.$$
So either $a_0b_0$ is nonzero, and we are done, or for some nontrivial $\chi$ we have $a_\chi  b_{\overline{\chi}}$ nonzero. In this latter case, replace $X_1$ by $X_1\otimes {\overline{\chi}}$ and $X_2$ by $X_2\otimes \chi $.

Since each nontrivial character $\chi$ of $Q_1$ occur exactly once
in $V$, for each such $\chi$ we have
$$1 =a_0b_\chi + a_\chi b_0 + \sum_{\rho \neq \chi} a_\rho b_{\chi \overline{\rho}}.$$
In particular we have the inequalities
$$a_0b_\chi \le 1,\ \  a_\chi b_0 \le 1.$$
Because $a_0, b_0$ are both nonzero, we infer that if $a_\chi \neq 0$, then $a_\chi = b_0=1$ (respectively
that if $b_\chi \neq 0$, then $a_0=b_\chi =1$). It cannot be the case that all $a_\chi$ vanish, otherwise
$X_1$ is the trivial module of dimension $>1$. This is impossible so long as $X_2$ is nontrivial, as each nontrivial character of $Q_1$ occurs in $V$ exactly once. But if all $a_\chi$ and all $b_\chi$ vanish, then $V$ is the trivial $Q_1$ module, which it is not. Therefore $a_0=1$ and, similarly, $b_0=1$, and all $a_\chi, b_\chi$ are either $0$ or $1$.
Now use again that the multiplicity of the trivial character of $Q_1$ in $V$ is $q$, so we have
$$q  = a_0b_0 +\sum_{\chi}a_\chi b_{\overline{\chi}}.$$
This is possible only if all $a_\chi$ and all $b_\chi$ are $1$. But then each $X_i$ has dimension $q$, which is impossible, as the product of their dimensions is $2q-1$.
\end{proof}

\begin{lem}  \label{primitive} The following statements hold for $G$.
\begin{enumerate}[\rm(1)]
\item $G$ preserves no nontrivial orthogonal decomposition of $V$. 
\item $V$ is not tensor induced for $G$.
\end{enumerate}
\end{lem}

\begin{proof}   We first prove (1).  We argue by contradiction. Suppose that  $V=W_1 \perp \ldots \perp W_r$
with $r > 1$. Because $G$ acts irreducibly, $G$ transitively permutes the $W_i$, and
all the $W_i$ have the same odd
dimension $d$ (because $2q-1 = rd$).
Since $r$ divides $2q-1$,  $\gcd(r,p)=1$, so the $p$-group $Q$ fixes at least one of the $W_i$, say $W_1$. 
Because $r >1$, there are other orbits of $Q$ on the set of blocks. Any of these has cardinality some power of $p$,
so the corresponding direct sum of $W_i$'s has odd dimension. As $2q-1$ is odd, there must be evenly many other orbits,
so at least three orbits in total. In each $Q$-stable odd-dimensional orthogonal space, $Q$ lies in a maximal torus of the corresponding $\SO$ group, so has a fixed line. Hence $\dim V^Q \ge 3$, contradiction.

We next show that
$V$ is not tensor induced.  We argue by contradiction. If $V$ is tensor induced,  
write $V = W \otimes \ldots \otimes W$
(with $f \ge 2$ tensor factors, $\dim W < \dim V$).   Then $Q_1$
must act transitively on the set of tensor factors (otherwise the representation
for $Q_1$ is tensor decomposable and the previous lemma gives
a contradiction).  

So by Jordan's theorem, there exists an element $y \in Q_1$ that acts fixed point freely
on the set of the $f$ tensor factors.   All such elements are conjugate in
$\GL(W) \wr \sym(f)$ and we have
$$ \chi_V(y) = (\dim W)^{f/p}.$$
[Indeed, after replacing $y$ by a $\GL(W) \wr \sym(f)$-conjugate, the situation is this. Each orbit of $y$ on the set of tensor factors has length $p$,
and $y$ acts on each corresponding $p$-fold self product of $W$, indexed by $\F_p$, by
mapping $\otimes_i w_i$ to  $\otimes_i w_{i+1}$. In terms of a basis $B:=\{e_j \}_{j=1,...,\dim W}$ of W, the only diagonal entries of the matrix of $y$ on this $W^{\otimes p}$
are given by the $\dim W$ vectors $e \otimes e ... \otimes e$ with $e \in B$.]  On the other hand, we have
$ \chi_V(y) = q-1$ for any nonzero element $y$ of $Q_1$.
Thus, if $d=\dim W$, we have
$d^{f/p} = q-1$.  Thus,   $\dim V = d^f = (q-1)^p > 2q-1$, 
a contradiction.

\end{proof}

\begin{cor}\label{normal}
Let $L \leq \SO(V)$ be any subgroup containing $G$ and let $1 \neq N \lhd L$. Then $N$ acts irreducibly on $V$.
\end{cor}

\begin{proof}
We argue by contradiction. Note that the conclusions of Lemmas \ref{tensor} and \ref{primitive} also hold for $L$.

\smallskip
(i) Because $N$ is normal in $L$,
$V$ is completely reducible for $N$. Let $V_1,\ldots,V_r$ be the distinct $N$-isomorphism
classes of $N$-irreducible submodules of $V$. Because $V$ is $L$-self dual, it is a fortiori $N$-self dual. Therefore the set of 
$V_i$ is stable by passage to the $N$-dual, $V_i \mapsto V_i^\star$.
The group $L$ acts transitively on the set of the 
$V_i$. Either every $V_i$ is $N$-self-dual, or none is (the $L$-conjugates of an $N$-self dual representation
are $N$-self dual).

When we
write $V$ as the direct sum of its the $N$-isotypic (``homogeneous'' in the
terminology of \cite[49.5]{C-R}) components,
$$V= W_1 \oplus \ldots \oplus W_r,$$
then for some integer $e \ge 1$ we have $N$-isomorphisms
$$W_i \cong e V_i := \ {\rm the\ direct\ sum\ of\ }e\ {\rm copies\ of\ }V_i.$$


If $r > 1$ and all the $W_i$ are self dual, then this is an orthogonal decompositon
(because for $i \neq j$, the inner product pairing of (any) $V_i$ with (any) $V_j$ is an $N$-homomorphism from $V_i$ to $V_j^\star \cong V_j$, so vanishes). This contradicts Lemma \ref{primitive}.

Suppose $r > 1$ and no $V_i$ is self-dual. Then the $V_i$ occur in pairs of duals. Therefore both $r$ and $\dim V$ are
even, again a contradiction.

\smallskip
(ii) We have shown that $r=1$ and $e >1$, i.e. $V \cong eV_1$. 
Now we apply Clifford's theorem, cf. \cite[Theorem 51.7]{C-R}. Thus $L$ preserves the $N$-isomorphism class of $V_1$, and
so we get an irreducible projective representation $L \mapsto \PGL(V_1) =\PSL(V_1)$, and $V$ as a projective
representation of $L$ is $V_1\otimes X$ with $L$ acting (projectively) irreducibly on $X$ through $L/N$, and $X$ of dimension $e$. Furthermore, 
the two factor sets associated to these two projective representations are inverses to each other (because the tensor product $V_1 \otimes X =V$ is a linear representation of $Q_1$).
Since  $e\dim V_1 = \dim V = 2q-1$, both $e$ and $n = \dim V_1$ are coprime to $p$.

We now claim that, restricted to $Q_1$, each of the tensor factors $V_1$ and $X$ lifts to a genuine linear
representation. Indeed, using the fact that $\PGL(n,\C) =\PSL(n,\C)$ and the short exact sequence
$$1 \rightarrow \mu_n \rightarrow \SL(n,\C) \rightarrow \PSL(n, \C) \rightarrow 1,$$
the obstruction for $(V_1)|_{Q_1}$, which is given by the first factor set restricted to $Q_1$, lies in the cohomology group $H^2(Q_1,\mu_n)$.
As $p \nmid n$ while $Q_1$ is a $p$-group, this cohomology group vanishes; and so the first 
factor set restricted to $Q_1$ is trivial, whence so is the second.
Thus the $Q_1$-module $V$ is tensor decomposable, contradicting Lemma \ref{tensor}.
\end{proof}

We next show that $G$ is finite.  It is convenient to use one more fact about 
$G$.   There is a subgroup $A$ (namely the  group $G_{geom}$ for the hypergeometric sheaf $\sH_{2q-1}$) of $\SO(V)$ such that $G$ is normal in $A$,
$A/G$ is cyclic of order dividing $2q-1$ and $A$ contains an element $x$ of 
order $2q-1$ with distinct eigenvalues on $V$.   

We also use the fact  that  $G$ has a nontrivial fixed space on $V \otimes V \otimes V$
(Theorem \ref{third moment}).

\begin{thm}\label{finite}$G$ is finite.
\end{thm}

\begin{proof} 
Suppose not. 
Let $N$ be any nontrivial normal (closed) subgroup of $G$. By Corollary \ref{normal}, $N$ is irreducible on $V$.

(i) Let $G^0$ be the identity component of $G$. We now show that $G^0$ is a simple algebraic group. Taking $N=G^0$, we have that $G^0$ acts irreducibly and hence is semisimple (as it lies in $\SO(V )$). Moreover, the center of $G^0$ is trivial (because it consists of scalars in  $\SO(V )$). Therefore if $G^0$ is not simple, it is the product of adjoint groups $L_j$, $1 \leq j \leq t$ (namely the adjoint forms of the factors of its universal cover), and $V$ is the (outer) tensor product $V = \otimes^t_{j=1}V_j$ of nontrivial irreducible $L_j$-modules $V_j$. 
By \cite[Corollary 2.7]{GT},  $G$ permutes these tensor factors $V_j$. This action is transitive, otherwise we contradict Lemma \ref{tensor}.
But this implies that $V$ is tensor induced for $G$, contradicting Lemma \ref{primitive}. Thus $G^0$ is a simple algebraic group.
 
(ii)  Because the subgroup of $G$ generated by its $p$-elements is Zariski dense, the finite group $G/G^0$ is generated by its $p$-elements. As $p$ is odd, it follows
 that either $G = G^0$ is a simple algebraic group or  $p=3$ and  
 $G^0 = D_4(\C)$. [In all other cases, the outer automorphism
 group, i.e., the automorphism group of the Dynkin diagram of $G^0$, has order at most $2$.]     Since $A/G$ has odd order, it follows
 that $A \le G$ as well unless $G^0=D_4(\C)$ and $3$ divides $2q-1$.
  
Suppose first that $A$ is connected and so a simple algebraic group.  Then it contains
a semisimple element $x$ acting with distinct eigenvalues.  This implies that a maximal torus
has all weight spaces of dimension at most $1$.  Moreover, the module is in the root lattice
(since it is odd dimensional and orthogonal). By a result of Howe \cite{howe} (see also 
\cite[Table]{Pa}), it follows if $G \neq \SO(V)$, then either $G=G_2(\C)$ with $\dim V =7$ or $G=\PGL_2(\C)$.  
If $\dim V = 7$, then $q=4$, a contradiction.  If $G=\PGL_2(\C)$,
then any finite abelian subgroup of odd order is cyclic and so $Q$ does not embed in $G$.

So $G=\SO(V)$.  However, $\SO(V)$ has no nonzero fixed points on $V \otimes V \otimes V$ and
this contradicts Theorem \ref{third moment}. 

Thus, it follows that $A$ is disconnected.
So the connected component is $D_4(\C)$ and this acts irreducibly on $V$.
If $D_4(\C)$ contains the element of order $2q-1$, then a maximal torus has all
weight space of dimension $1$ and again using \cite{howe}, we obtain a contradiction.
If not, then $3$ divides $2q-1$, whence $p \ge 5$ and $Q \le D_4(\C)$.  
Any elementary abelian $p$-subgroup of $D_4(\C)$ is contained in a torus and so again
we see that the connected component has all weight spaces of dimension at most $1$
and we obtain the final contradiction using \cite{howe}.   
\end{proof}

Let $F^*(X)$ denote the generalized Fitting subgroup of a finite group $X$ (so $X$ is almost
simple precisely when $F^*(X)$ is a non-abelian simple group).

   \begin{cor}\label{fstar}   $A$ and $G$ are  almost simple and $F^*(A)=F^*(G)$
acts irreducibly on $V$. 
\end{cor}

\begin{proof}    
Let $N$ be a minimal normal subgroup of $G$.   
By Corollary \ref{normal}, $N$ acts irreducibly, and so by Schur's lemma $C_A(N) = Z(N)=1$ as
$A < \SO(V)$ with $\dim V$ odd.   So $N$ is a direct product of non-abelian
simple groups.   
Arguing as in p. (i) of the proof of Theorem \ref{finite}, we see
that $N$ is non-abelian simple (otherwise the module $V$ would be tensor induced).  As $C_G(N) = 1$, 
we see that $N \lhd G \leq \Aut(N)$, and so $G$ is almost simple and $F^*(G) = N$.   

Now, as $G \lhd A$, $A$ normalizes $N$. Again since $C_A(N) = 1$ we have that $N \lhd A \leq \Aut(N)$, 
and so $A$ is almost simple and $F^*(A) = N$.
\end{proof}

We next observe that:

\begin{lem} $F^*(G)$ is not a sporadic simple group.
\end{lem}
\begin{proof}  Notice that both $G$ and $A$ are generated by elements of odd order ($p$-elements for $G$, 
these and elements of order $2q-1$ for $A$). On the other hand, we have $S \leq G \leq A \leq \Aut(S)$ for
$S = F^*(G)$. One knows \cite{ATLAS} that if $S$ is sporadic, then $|\Out(S)| \leq 2$. Therefore, if $S$ is a sporadic simple group, then $G=A=S$. The result now folllows easily from information in \cite{ATLAS}.   
Namely, we observe that if $q$ is an odd prime power with $q^2$ dividing $|G|$,
then $G$ has no irreducible representation of dimension $2q-1$.  
\end{proof}

We next consider the case $F^*(G)=\alt(n)$.   First note $\alt(5)$ contains
no noncyclic elementary abelian groups of odd order and so is ruled
out.  Since $2q-1$ is odd,  we see that if $G=\alt(n)$, then $A=G = \alt(n)$  (as
the outer automorphism group of $\alt(n)$ is a $2$-group).

\begin{thm}\label{alt}  Let $\Gamma=\alt(n)$ with $n \ge 6$.  Suppose that $x \in \Gamma$
has odd order and $V$ is an irreducible $\C[\Gamma]$-module such that
$x$ acts as a semisimple regular element on $V$.   Then   
one the following holds:
\begin{itemize}
\item[(1)]$V$ is the deleted permutation module of dimension $n-1$ {\em (i.e. the nontrivial irreducible constituent of
$\C_{\alt({n-1})}^{\alt(n)}$)}, and $x$ is either an $n$-cycle (for $n$ odd) or
a product of two disjoint cycles of coprime lengths (for $n$ even); or
\item[(2)]$n=8$,  $x$ has order $15$ and $\dim V=14$.
\end{itemize}
\end{thm}

\begin{proof}     First note that if $V$ is the deleted permutation
module of dimension $n-1$, an element with $3$ or more disjoint cycles has at least
a two dimensional fixed space on $V$.   Similarly, if $x$ has two disjoint cycles of lengths
which are not coprime, then $x$ has a two dimensional eigenspace on $V$.  

Next we observe that if $x$ is semisimple regular on 
$V$, then the order of $x$ is at least $\dim V$.   This proves the result 
for $6 \le n \le14$ by inspection of the odd order elements and dimensions
of the irreducible modules, aside from the case $n=8$ and $\dim V=14$
(note that $\alt(8)$ contains an element of order $15$).   Recall that $\alt(8) \cong 
\GL_4(2)$ and it acts $2$-transitively on the nonzero vectors.   The only irreducible
module of dimension $14$ 
is the irreducible summand of the permutation module of dimension $15$.   In this case
$x$ has a single orbit in the permutation representation and so $x$ is semisimple
regular on $V$.

Now assume that $n \ge 15$.  

Suppose first that $x$ has at most three nontrivial cycles. 
Then the order of $x$ is less than $(n/3)^3 = n^3/27$ and so 
$\dim V < n^3/27$.  Let $W$ be a complex irreducible $\sym(n)$-module whose restriction to
$\alt(n)$ contains $V|_{\alt(n)}$. Since $2 \leq \dim W < 2n^3/27$, it follows
by \cite[Result 3]{Ra} that  $W \cong S^\lambda$ or $S^\lambda \otimes {\sgn}$, where $S^{\lambda}$ is the Specht module
labeled by the partition $\lambda$ of $n$, with $\lambda = (n-1,1)$, $(n-2,2)$, or 
$(n-2,1,1)$. Restricting back to $\alt(n)$, we see that $V|_{\alt(n)} = S^\lambda|_{\alt(n)}$.

Note that 
$$\dim S^{(n-2,1,1)}= (n-1)(n-2)/2,\ \  \dim S^{(n-2,2)} = n(n-3)/2.$$  
 It is straightforward to see that the dimension
of the fixed space of $x$ on either of these modules is at least two dimensional,
a contradiction. Hence $\lambda = (n-1,1)$ and $V|_{\alt(n)}$ is the deleted permutation module of dimension $n-1$.

We now induct on $n$.   The base case $n  \le 14$ has already done.
We may assume that $x$ has at least four nontrivial cycles (each of odd length, as $x$ has odd order). 
View $x \in J :=\alt(a) \times \alt(b)$. where the projection into $\alt(b)$
is a $b$-cycle and so the projection into $\alt(a)$ is a product of at least three
disjoint cycles.   Thus, $a \ge 9$.   Let  $W$ be an irreducible
$J$-submodule of $V$ with $\alt(a)$ acting nontrivially.  So  $W=W_1 \otimes W_2$
with $W_1$ an irreducible $\alt(a)$-module.  
Then $x$ must be multiplicity
free on each $W_i$ and by induction $x$ can have at most two cycles in
$\alt(a)$, a contradiction.   
\end{proof}

Note that the previous result does fail for $n=5$.   $\alt(5)$ has a $5$-dimensional representation
in which an element of order $5$ has all eigenvalues occurring once.  
Thus if $G = \alt(n)$, we see that $n=2q$ and $V$ is the deleted permutation module.

\begin{cor}If $G=G_{geom}$ is an alternating group $\alt(n)$ for some $n$, then $n=2q$.
\end{cor}

\bigskip
Finally, we consider the case where $G$ is an almost simple finite group of Lie type, defined over 
$\F_s$, where $s = s_0^f$ is a power of a prime $s_0$. Let us denote
$$S := F^*(G) = F^*(A).$$
 Recall that $S$ is simple, irreducible on
$V$,  and $Z(S) = 1$ by Corollary \ref{fstar}. 
We will freely use information on character tables of simple groups available in
\cite{ATLAS,GAP}, as well as degrees of complex irreducible characters of various quasisimple groups of 
Lie type available in \cite{Lu}. Finally, we will also use bounds on the smallest degree $d(S)$ of nontrivial 
complex irreducible representations of $S$ as listed in \cite[Table 1]{T}.

\begin{thm}\label{cross}
Suppose $s_0 \neq p$. Then $S \cong \alt(m)$ with $m \in \{5,6,8\}$.
\end{thm}

\begin{proof}
(i) Assume the contrary. We will exploit the existence of the subgroup $Q \leq G$. Recall that the {\it $p$-rank} $m_p(G)$ is the largest rank of 
elementary abelian $p$-subgroups of $G$. Furthermore, 
\begin{equation}\label{aut}
  \Aut(S) \cong \Innd(S) \rtimes \Phi_S\Gamma_S,
\end{equation}  
where $\Innd(S)$ is the subgroup of inner-diagonal automorphisms of $S$, $\Phi_S$ is a subgroup of field automorphisms of 
$S$ and $\Gamma_S$ is a subgroup of graph automorphisms of $S$, as defined in \cite[Theorem 2.5.12]{GLS}. As $F^*(G)=S$, we can embed 
$G$ in $\Aut(S)$. 
Now, given an elementary abelian $p$-subgroup $P < G$ of rank $m_p(G)$, we can define a normal series 
$$1 \leq P_1 \leq P_2 \leq P,$$
where $P_1 = P \cap \Innd(S)$ and $P_2 = P \cap (\Innd(S) \rtimes \Phi_S)$. As $\Phi_S$ is cyclic and $P$ is elementary 
abelian, $P_2/P_1$ has order $1$ or $p$. Set $e = 1$ if $S \cong P\Omega^+_8(s)$ and $p = 3$, and $e=0$ otherwise. Then 
$|P/P_2| \leq p^e$.  

Next we bound $|P_1|$ when $S$ is not a Suzuki-Ree group. 
Let $\Phi_j(t)$ denote the $j^{\mathrm {th}}$ cyclotomic polynomial in the variable $t$, and let 
$m$ denote the multiplicative order of $s$ modulo $p$, so that $p|\Phi_m(s)$. Note that
we can find a simple algebraic group $\sG$ of adjoint type defined over $\overline\F_s$ and a Frobenius endomorphism
$F:\sG \to \sG$ such that $\Innd(S) \cong \sG^F$. Letting $r$ denote the rank of $\sG$, then one can find 
$r$ positive integers $k_1, \ldots ,k_r$ and $\eps_1, \ldots ,\eps_r = \pm 1$ such that
$$|\Innd(S)| = s^N\prod_{j \geq 1}\Phi_j(s)^{n_j} = s^N\prod^r_{i=1}(s^{k_i}-\eps_i)$$
for suitable integers $N$, $n_j$. Then, according to \cite[Theorem 4.10.3(b)]{GLS}, $|P_1| \leq p^{n_m}$. 
Let $\varphi(\cdot)$ denote the Euler function, so that $\deg(\Phi_m) = \varphi(m)$. Inspecting the 
integers $k_1, \ldots,k_r$, one sees that $n_m \leq r/\varphi(m)$. It follows that
$$|P_1| \leq \Phi_m(s)^{n_m} \leq ((s+1)^{\varphi(m)})^{r/\varphi(m)} \leq (s+1)^r.$$
In fact, one can verify that this bound on $|P_1|$ also holds for Suzuki-Ree groups.
Putting all the above estimates together, we obtain that
\begin{equation}\label{for-q}
 q^2 = |Q| \leq |P| \leq (s+1)^{r+1+e}.
\end{equation}   
We will show that this upper bound on $q$ contradicts the lower bound
\begin{equation}\label{for-d}
2q-1 = \dim V \geq d(S)
\end{equation}
in most of the cases. Let $f^*$ denote the odd part of the integer $f$.

\smallskip
(ii) First we handle the case when $S$ is of type $D_4$ or $\tw3 D_4$. Here, $q \leq (s+1)^3$ by \eqref{for-q}. On 
the other hand, $d(S) \geq s(s^4-s^2+1)$, contradicting \eqref{for-d} if $s \geq 3$. If $s =2$, then 
$\Phi_S\Gamma_S = C_3$, and so instead of \eqref{for-q} we now have that $q^2 \leq 3^5$, whence 
$q \leq 13$, $2q-1 \leq 25 < d(S)$, again a contradiction.

From now on we may assume $e=0$.

Next we consider the case $S = \PSL_2(s)$. Then $\Out(S) = C_{\gcd(2,s-1)} \times C_f$, and $m_p(S) \leq 1$. 
It follows that $Q$ is not contained in $S$ but in $S \rtimes C_f$ and $3 \leq p|f^*$, and furthermore
$q^2 = |Q| \leq (s+1)f^*$. As $d(S) \geq (s-1)/2$, \eqref{for-d} now implies that 
$$s+1 = s_0^f+1 \leq 16f^*,$$
a contradiction if $s_0 \geq 5$, or $s_0= 3$ and $f \geq 5$, or $s_0 = 2$ and $f \geq 7$. If $s_0=3$ and $f \leq 4$, then
$f^* = 3 = f = p$, forcing $p=s_0$, a contraction. Suppose $s_0=2$ and $f \leq 6$. If $p = 5$, then $f^* = 5$ and 
$m_p(G) = 1$, ruling out the existence of $Q$. If $p=3$, then $f = 3,6$, whence $q^2 \leq 9$ and $2q-1 \leq 5 < d(S)$.

Suppose that $S = \tw2 B_2(s)$ or $\tw2 G_2(s)$ with $s \geq 8$. Since $m_p(S) \leq 1$, we see that 
$q^2 \leq (s+1)f$, contradicting \eqref{for-d} as $d(S) \geq (s-1)\sqrt{s/2}$. 

Now we consider the remaining cases with $r = 2$. Then $q \leq (s+1)^{3/2}$ by \eqref{for-q}. This contradicts \eqref{for-d}
if $S = G_2(s)$ (and $s \geq 3$), as $d(S) \geq s^3-1$. Similarly, $S \not\cong \PSL_3(s)$ with $s \geq 5$ and 
$S \not\cong \PSU_3(s)$ with $s \geq 8$. If $S = \PSp_4(s)$, then the case $2 \nmid s \geq 19$ is ruled out since
$d(S) \geq (s^2-1)/2$, and similarly the case $2|s \geq 8$ is ruled out since $d(S) = s(s-1)^2/2$. In the remaining cases,
$\Phi_S\Gamma_S$ is a $2$-group, and so $Q \leq S$, $q^2 \leq (s+1)^2$, $q \leq s+1$. Now $\PSL_3(s)$ and $\PSU_3(s)$ 
with $s \geq 4$ are ruled out by \eqref{for-d}, and the same for $\PSp_4(s)$ with $s \geq 4$. Note that when $s=3$, 
$q \geq 4$ and so $\gcd(q,2s) \neq 1$, a contradiction. If $S = \SL_3(2)$, then $q=3$ and $S$ has no irreducible character of
degree $2q-1$. Finally, $\Sp_4(2)' \cong \alt(6)$.

Next we handle the groups with $r=3$. Here $q \leq (s+1)^2$ by \eqref{for-q}. Then \eqref{for-d} implies that $s \leq 5$. In this case,
$\Out(S)$ is a $2$-group, and so $Q \leq S$ and $q \leq (s+1)^{3/2}$ by \eqref{for-q}. Using \eqref{for-d}, we see that $s \leq 3$. The 
remaining groups $S$ cannot occur, since $S$ does not have a real-valued complex irreducible character of degree $2q-1$.

\smallskip
(iii) From now we may assume that $r \geq 4$ (and $S$ is not of type $D_4$ or $\tw3 D_4$). First we consider the case $s=2$.
If $S = \SL_n(2)$ with $n \geq 5$, then since $\Out(S) = C_2$, the arguments in (i) show that $q^2 \leq 3^{n-1}$. This contradicts
\eqref{for-d}, since $d(S) = 2^n-2$. Suppose $S = \SU_n(2)$ with $n \geq 7$. Note by \cite[Theorem 4.1]{TZ} 
that the first three nontrivial irreducible characters of $S$ are Weil characters and either non-real-valued or of even degree, and the next
characters have degree at least $(2^n-1)(2^{n-1}-4)/9$. Hence
\eqref{for-d} can be improved to 
$$2q-1 \geq (2^n-1)(2^{n-1}-4)/9,$$
which is impossible since $q^2 \leq 3^n$ by \eqref{for-q}. If $S = \PSU_n(2)$ with $n = 5,6$, then $q^2 \leq 3^6$, and 
$S$ has no nontrivial real-valued irreducible character of odd degree $\leq 2q-1 \leq 53$. If $S = \tw2 F_4(2)'$ or $F_4(2)$, then $q^2 \leq 3^5$, 
$q \leq 13$, and $S$ has no nontrivial real-valued irreducible character of odd degree $\leq 2q-1 \leq 25$. 

Suppose $S = \Sp_{2n}(2)$ or $\Omega^\pm_{2n}(2)$. Then $\Out(S)$ is a $2$-group (recall $S$ is not of type $D_4$), and so 
$q^2 \leq 3^n$. On the other hand, $d(S) \geq (2^n-1)(2^{n-1}-2)/3$, contradicting \eqref{for-d}. Finally, if $S$ is of type $E_8$, $E_7$,
$E_6$, or $\tw2 E_6$, then $q^2 \leq 3^8$ whereas $d(S) > 2^{10}$, again contradicting \eqref{for-d}.

\smallskip
(iv) Suppose that $S = \PSp_{2n}(s)$ with $n \geq 4$ and $2 \nmid s \geq 3$. Then by \eqref{for-q} and \eqref{for-d} we have
$$(s^n-1)/2 \leq 2q-1 \leq 2(s+1)^{(n+1)/2}-1,$$
implying $n \leq 5$ and $s = 3$. In this case, inspecting the order of $\PSp_{10}(3)$ we see that $q^2 \leq 121$, and so
$2q-1 \leq 21 < d(S)$, a contradiction.

Next suppose that $S = \PSU_n(3)$ with $n \geq 5$. Then $q \leq 2^n$ and $d(S) \geq (3^n-3)/4$, and so 
\eqref{for-d} implies that $n = 5$.  In this case, inspecting the order of $\SU_5(3)$ we see that $q^2 \leq 61$, and so
$2q-1 \leq 13 < d(S)$, again a contradiction. 

Now we may assume that $r \geq 4$, $s \geq 3$, $S \not\cong \PSp_{2n}(s)$ if $2 \nmid s$, and moreover $s \geq 4$ if $S \cong \PSU_n(s)$.
Then one can check that $d(S) \geq s^r \cdot (51/64)$ (with equality attained exactly when $S \cong \PSU_5(4)$). Hence 
\eqref{for-q} and \eqref{for-d} imply that
$$(51/64)^2 \cdot s^{2r} \leq d(S)^2 \leq (2q-1)^2 < 4(s+1)^{r+1} \leq 4 \cdot (4s/3)^{r+1},$$
and so
$$(3s/4)^{r-1} < 4 \cdot (64/51)^2 \cdot (4/3)^2,$$
which is impossible for $r \geq 4$.
\end{proof}

\begin{thm}\label{defi}
Suppose $s_0 = p$. Then $S \cong \alt(6)$.
\end{thm}

\begin{proof}
(i) Assume the contrary. We now exploit the existence of the element $x \in A$ of order $2q-1$ which has simple spectrum on $V$.
As before, we can embed $A$ in $\Aut(S)$ and again use the decomposition \eqref{aut}. Let 
$\langle y \rangle = \langle x \rangle \cap \Innd(S)$. We also view $S = \sG^F$ for some Frobenius endomorphism 
$F:\sG \to \sG$ of a simple algebraic group $\sG$ of adjoint type, defined over $\overline\F_p$. Note that $y$ is an $F$-stable 
semisimple element in $\sG$, hence it is contained in an $F$-stable maximal torus $\sT$ of $\sG$ by \cite[Corollary 3.16]{DM}.
It follows that $|y| \leq |\sT^F| \leq (s+1)^r$, if $r$ is the rank of $\sG$. Set $e = 3$ if $S$ is of type $D_4$ or $\tw3 D_4$, and 
$e = 1$ otherwise. Then the decomposition \eqref{aut} shows that 
$$|x|/|y| \leq ef^*,$$
where $f^*$ denotes the odd part of $f$ as before (and $s = p^f$). We have thus shown that
\begin{equation}\label{order}
   2q-1 = |x| \leq (s+1)^ref^*.
\end{equation}
We will frequently use the following remark:
\begin{equation}\label{for-f}
  \mbox{Either } f=1 \mbox{ and }s \geq 3f^*, \mbox{ or }s \geq 9f^*.
\end{equation}
We will show that in most of the cases \eqref{order} contradicts \eqref{for-d}. First we handle the case $S$ is of type 
$D_4$ or $\tw3 D_4$, whence $d(S) \geq s(s^4-s^2+1)$. Hence \eqref{order} and \eqref{for-f} imply that 
$$s(s^4-s^2+1) \leq 2q-1 \leq s(s+1)^4/3$$
if $f>1$, a contradiction. If $f=1$, then since $2q-1 = \dim V$ is coprime to $2s$, we see by \cite{Lu} that 
$$2q-1 > s^7/2 > 3(s+1)^4,$$
contradicting \eqref{order}.  

\smallskip
(ii) From now on we may assume that $e=1$. Next we rule out the case where $V|_S$ is a Weil module of
$S \in \{\PSL_n(s),\PSU_n(s)\}$ with $n \geq 3$, or $S = \PSp_{2n}(s)$ with $n \geq 2$. Indeed, in this case, if 
$S = \PSL_n(s)$ then  
$$\dim V = (s^n-s)/(s-1),~~(s^n-1)/(s-1)$$
is congruent to $0$ or $1$ modulo $p$ and so cannot be equal to $2q-1$. Similarly, if $S = \PSU_n(s)$,
then $V|_S$ can be a Weil module of dimension $2q-1$ only when $2|n$ and $\dim V = (s^n-1)/(s+1)$. In this case,
$$q = (2q-1)_p = ((s^n+s)/(s+1))_p = s$$
(where $N_p$ denotes the $p$-part of the integer $N$),
and so $2s-1 = (s^n+s)/(s+1)$, a contradiction. Likewise, if $S = \PSp_{2n}(s)$,
then $V|_S$ can be a Weil module of dimension $2q-1$ only when $p=3$ and $\dim V = (s^n+1)/2$. In this case,
$$s^n = (2\dim V-1)_p = (4q-3)_p,$$
and so $q = 3$ and $n = 2$. One can show that $\PSp_4(3)$ does possess a complex irreducible module of dimension
$2q-1=5$, with an element $x$ of order $5$ with simple spectrum on $V$ and a subgroup $Q \cong C_3^2$ with desired 
prescribed action on $V$; however, any such module is not self-dual.
Henceforth, for the aforementioned possibilities for $S$ we may assume that $\dim V \geq d_2(S)$, the next degree after the degree of 
Weil characters. Note that $d_2(S)$ for these simple groups $S$ is determined in Theorems 3.1, 4.1, and 5.2  of \cite{TZ}.

\smallskip
(iii) Suppose $S = \PSL_2(s)$; in particular, $s \neq 9$. Assume $f \geq 4$. As $\Out(S) = C_{2,s-1} \times C_f$, we see that 
$q^2 \leq sf_p < s^2/20$, whereas $2q-1 \geq d(S) \geq (s-1)/2$, a contradiction. If $f \leq 3$ but $f_p > 1$, then $f=p=3$, $s= 3^3$, 
$q^2 \leq sf = 3^4$, forcing $q=9$. But then $S = \PSL_2(27)$ has no irreducible character of degree $2q-1=17$. Thus $f_p = 1$,
$q^2 \leq s$, and so \eqref{for-d} implies that $s \leq 17$. As $s \neq 9$, we see that $m_p(G) = m_p(S) = 1$, contradicting the existence
of $Q$.

Next we consider the case $S = \PSL_3(s)$ or $\PSU_3(s)$. If $f > 1$, then \eqref{for-d}--\eqref{for-f} imply
$$(s-1)(s^2-s+1)/3 \leq d_2(S) \leq 2q-1 \leq (s+1)^2 s/9,$$
which is impossible. Thus $f=1$, whence 
$$(s-1)(s^2-s+1)/3 \leq d_2(S) \leq 2q-1 \leq (s+1)^2,$$
yielding $s \leq 5$. But if $s = 3$ or $5$, then any nontrivial $\chi \in \Irr(S)$ of odd degree coprime to $s$ is a Weil character, which has been 
ruled out in (ii).

Suppose now that $S = \PSL_4(s)$ or $\PSU_4(s)$. For $s \geq 5$ we have 
$$(s-1)(s^3-1)/2 \leq d_2(S) \leq 2q-1 \leq (s+1)^3 s/3,$$
which is possible only when $s \leq 11$. Thus $3 \leq s \leq 11$, whence $f^*=1$, and so 
$$(s-1)(s^3-1)/2 \leq d_2(S) \leq 2q-1 \leq (s+1)^3,$$
leading to $s = 3$. If $s = 3$, then any odd-order element in $G$ has order $\leq 13$, whereas $d(S) = 21$, contradicting \eqref{for-d}. 

To finish off type $A$, assume now that $S = \PSL_n(s)$ or $\PSU_n(s)$ with $n \geq 5$. Then \eqref{for-d}--\eqref{for-f} imply 
$$\frac{(s^n+1)(s^{n-1}-s^2)}{(s+1)(s^2-1)} \leq d_2(S) \leq 2q-1 \leq (s+1)^{n-1}s/3,$$
whence 
$$s^{2n-3} < (s+1)^ns/3 < s^{51n/40}$$
(because $(s+1)/s \leq 4/3 < 3^{11/40}$), a contradiction as $n \geq 5$.

\smallskip
(iv) Suppose $S = P\Omega^\pm_{2n}(s)$ with $n \geq 4$.
For $n \geq 5$ we get that
$$\frac{(s^n-1)(s^{n-1}-s)}{s^2-1} \leq d(S) \leq 2q-1 \leq (s+1)^nf \leq (s+1)^ns/3,$$
whence
$$s^{2n-3.1} < (s+1)^ns/3 < s^{51n/40},$$
a contradiction. If $n = 4$, then $S = P\Omega^-_8(s)$. In this case, since $2q-1$ is coprime to $2s$, \cite{Lu} implies that
$$2q-1 \geq (s^4+1)(s^2-s+1)/2 > (s+1)^4s/3,$$
again a contradiction.

Suppose $S = \PSp_{2n}(s)$ with $n \geq 2$ or $\Omega_{2n+1}(s)$ with $n \geq 3$.
Using the bound $2q-1 \geq d_2(S)$ for $S = \PSp_{2n}(s)$ and $2q-1 \geq d(S)$ otherwise, we get for $n \geq 3$ that
$$\frac{(s^n-1)(s^n-s)}{s^2-1} \leq 2q-1 \leq (s+1)^nf \leq (s+1)^ns/3,$$
whence
$$s^{2n-2.1} < (s+1)^ns/3 < s^{51n/40},$$
a contradiction. If $n = 2$, then $S = \PSp_4(s)$, and we have 
$$s(s-1)^2 \leq 2q-1 \leq (s+1)^s/3,$$
forcing $q \leq 9$. If $5 \leq q \leq 9$, then since the degree $2q-1 = \dim V$ is coprime to $2s$, we again get
$2q-1 > 300 \geq (s+1)^2s/3$. Finally, $\PSp_4(3)$ has no nontrivial non-Weil character of degree coprime to $6$. 

\smallskip
(v) If $S$ is of type $E_6$, $\tw2 E_6$, $E_7$, or $E_8$, then
$$(s^5+s)(s^6-s^3+1) \leq  d(S) \leq 2q-1 \leq (s+1)^8f \leq (s+1)^8s/3,$$
a contradiction. Similarly, if $S = F_4(s)$, then
$$s^8-s^4+1 = d(S) \leq 2q-1 \leq (s+1)^4s/3,$$
which is impossible. Likewise, if $S = G_2(s)$ with $s \geq 5$, then
$$s^3-1 \leq d(S) \leq 2q-1 \leq (s+1)^2s/3,$$
again a contradiction. Next, if $S = G_2(3)$, then $2q-1 \leq 16$ cannot be a degree of an irreducible character of $S$.
Finally, if $S = \tw2 G_2(s)$, then 
$$s^2-s+1 = d(S) \leq 2q-1 \leq (s+1)f \leq (s+1)s/3,$$
again a contradiction since $s \geq 27$. 

\end{proof}

Our proof is now concluded by applying Theorem \ref{alt}. 
\end{proof}

\end{document}